\DeclareSymbolFont{bbold}{U}{bbold}{m}{n}
\DeclareSymbolFontAlphabet{\mathbbm}{bbold}
\theoremstyle{plain}
	\newtheorem{theorem*}{Theorem}
	\newtheorem{theorem}{Theorem}[section]
\numberwithin{equation}{section}
	\newtheorem{proposition}[theorem]{Proposition}
	\newtheorem{lemma}[theorem]{Lemma}
	\newtheorem{corollary}[theorem]{Corollary}
\theoremstyle{definition}
	\newtheorem{definition}[theorem]{Definition}
	\newtheorem{question}[theorem]{Question}
	\newtheorem*{acknow}{Acknowledgements}
\theoremstyle{remark}
\numberwithin{equation}{section}
\newcommand{\cstar}{${C}^\ast$}
\newcommand{\bbR}{\mathbb{R}}
\newcommand{\bbN}{\mathbb{N}}
\newcommand{\e}{\varepsilon}
\newcommand{\cU}{\mathcal{U}}
\newcommand{\cN}{\mathcal{N}}
\newcommand{\cM}{\mathcal{M}}
\newcommand{\ctr}{\mathrm{ctr}}
\newcommand{\cLc}{\mathcal{L}_{C^*}}
\newcommand{\cLt}{\mathcal{L}_2}
\newcommand{\xMapsto}[2][]{\ext@arrow 0599{\Mapstofill@}{#1}{#2}}
\def\Mapstofill@{\arrowfill@{\Mapstochar\Relbar}\Relbar\Rightarrow}
\renewcommand{\phi}{\varphi}
\title{Ultraproducts of factorial $W^*$-bundles}
\author[Andrea Vaccaro]{Andrea Vaccaro}
\address{Mathematisches Institut, Fachbereich Mathematik und Informatik der
Universit\"at M\"unster, Einsteinstrasse 62, 48149 M\"unster, Germany.}
\email{avaccaro@uni-muenster.de}
\urladdr{https://sites.google.com/view/avaccaro}
\thanks{
The author was supported by the Deutsche Forschungsgemeinschaft (DFG, German Research Foundation) under Germany’s Excellence Strategy EXC 2044 –390685587, Mathematics M\"unster: Dynamics–Geometry–Structure, through SFB 1442, by the ERC Advanced Grant 834267 - AMAREC, and by the European Union’s Horizon 2020 research and innovation program under the Marie Sk\l odowska- Curie grant agreement No. 891709.}
\keywords{}
\subjclass[2010]{}
\begin{document}
\maketitle
\begin{abstract}
This paper investigates {factorial} $W^*$-bundles and their ultraproducts. More precisely, a $W^*$-bundle
is \emph{factorial} if the von Neumann algebras associated to its fibers are all factors.
Let $\cM$ be the tracial
ultraproduct of a family of factorial $W^*$-bundles over compact Hausdorff spaces with finite, uniformly bounded
covering dimensions. We prove that in this case
the set of limit traces in $\cM$ is weak$^*$-dense in the trace space $T(\cM)$. This in particular entails that $\cM$ is {factorial}.
We also provide, on the other hand, an example of ultraproduct of factorial $W^*$-bundles which is not factorial.
Finally, we obtain some results of model-theoretic nature: if $A$ and $B$ are exact, $\mathcal{Z}$-stable \cstar-algebras,
or if they both have strict comparison, then
$A \equiv B$ implies that $T(A)$ is Bauer if and only if $T(B)$ is.  If moreover both $T(A)$ and $T(B)$ are Bauer simplices and second countable,
then the sets of extreme traces $\partial_e T(A)$ and $\partial_e T(B)$ have the same covering dimension.
\end{abstract}
\section{Introduction}
$W^*$-bundles, first introduced by Ozawa in \cite{ozawa_dix} as tracial $W^*$-analogues of \cstar-bundles
and of $C(X)$-algebras, are \cstar-algebras that arise as bundles over compact topological spaces and whose fibers are tracial von Neumann algebras (Definition \ref{def:wbundle}).

Due to their hybrid nature, $W^*$-bundles generally find their main \emph{raison d'être} in the role of bridge that they play between tracial von Neumann algebras
and stably finite \cstar-algebras. Indeed, Ozawa's foresighted intuition to isolate this class in \cite{ozawa_dix} was prompted by
the celebrated paper by Matui and Sato \cite{matuisato} and the subsequent series of work \cite{kirchror, sato, TWW}, investigating the Toms--Winter
Conjecture for \cstar-algebras whose trace space is a Bauer simplex. $W^*$-bundles arise as tracial completions (in the sense of \cite{ozawa_dix}, see Definition \ref{def:comp})
of such \cstar-algebras, and they have been used systematically for arguments relying on approximations and properties where tracial 2-norms appear,
with numerous applications also in the equivariant framework (\cite{BBSTWW, liao:Z, liao:Zm, wouters}).

In this note we investigate $W^*$-bundles from a more abstract point of view, as a class in its own right,
with an approach closer to that in \cite{ev_pen, sam_thesis}. The main motivation for the present paper is the forthcoming
work on tracially complete \cstar-algebras \cite{tracially_complete} by Carri\'on, Castillejos, Evington, Gabe, Schafhauser, Tikuisis and White.
We briefly pause to report some of the basic concepts and open problems
considered in their project in order to give the proper context and motivation to our results. We would like to thank the authors of \cite{tracially_complete} for allowing us
to include here some of the contents of their work, not yet publicly available at the time of writing this note.

The fundamental definition considered in \cite{tracially_complete} is that of \emph{tracially complete \cstar-algebra}, which provides an abstract
framework to study tracial completions of \cstar-algebras as defined in \cite{ozawa_dix}.

\begin{definition}[\cite{tracially_complete}] \label{def:tc}
Fix a \cstar-algebra $\cM$ and a non-empty set $X$ of $T(\cM)$, the set of tracial states of $\cM$. Consider the 2-seminorm
\[
\lVert a \rVert_{2, X} = \sup_{\tau \in X} \tau(a^*a)^{1/2}, \ a \in \cM.
\]
A \emph{tracially complete} \cstar-algebra is a pair $(\cM, X)$ where $\cM$ is a unital\footnote{The assumption of unitality is proved to be redundant in \cite{tracially_complete}.} \cstar-algebra and $X$ is a compact, non-empty, convex
subset of the trace space $T(\cM)$ of $\cM$ such that $\lVert \cdot \rVert_{2, X}$
is a norm on $\cM$ and such that the \cstar-norm unit ball of $\cM$ is $\lVert \cdot \rVert_{2,X}$-complete. A tracially complete \cstar-algebra $(\cM, X)$
is \emph{factorial} if moreover $X$ is a closed face of $T(\cM)$.
\end{definition}

The most elementary examples of tracially complete \cstar-algebras are tracial von Neumann algebras $(\cM, \tau)$,
where $\tau$ is a faithful normal trace. This is the scenario where $X = \{ \tau \}$. $W^*$-bundles form another important class
of tracially complete \cstar-algebras (see \S\ref{ss.wbundle}) and, in the factorial case, they correspond precisely to those $(\cM, X)$ for which $X$ is a Bauer simplex (this is a consequence of \cite[Theorem 3]{ozawa_dix}; see Theorem \ref{thm:bauer}).

A tracial von Neumann algebra $(\cM, \tau)$ is factorial as a tracially complete \cstar-algebra if and only if it is a factor, hence the name.
Part of the motivation why the class of factorial tracially complete \cstar-algebras has been isolated in \cite{tracially_complete}, is that such algebras tend to be more manageable and tractable than general tracially complete \cstar-algebras.
This emerges both in technical and elementary facts (such as Lemma \ref{lemma:traces}, Lemma \ref{lemma:fdlh_bundles} or Proposition \ref{prop:dixmier}),
as well as in more ambitious and sophisticated results like the classification theorems announced in \cite{tracially_complete}.

In this paper we address the following question, which appeared in an early version of
\cite{tracially_complete}.

\begin{question}[\cite{tracially_complete}] \label{ques:ultra}
Let $((\cM_i, X_i) \mid {i \in I})$ be a sequence of factorial tracially complete \cstar-algebras, let $\cM = \prod^\cU \cM_i$ be
the corresponding tracial ultraproduct, and let $X$ be the weak$^*$-closure of the set of all limit traces
on $\cM$. Is $(\cM, X)$ factorial?
\end{question}

Limit traces on $\cM$ are those that are obtained by taking $\cU$-limits of sequences of traces $(\tau_i)_{i \in I} \in \prod_{i \in I} X_i$ (see \S\ref{ss.ultra}).
The tracial ultraproduct $(\cM, X)$ considered in Question \ref{ques:ultra} is the \emph{right} notion of ultraproduct in the category
of tracially complete \cstar-algebras (see \S\ref{ss.ultra} for the case of $W^*$-bundles and \S\ref{ss.mt} for general tracially complete
\cstar-algebras), so Question \ref{ques:ultra} is simply asking whether factoriality
is preserved when passing to the ultraproduct. This is the case for tracial von Neumann algebras, indeed it is well-known that the
tracial ultraproduct of a family of finite factors (which in this case corresponds to the usual von Neumann ultraproduct) is again a finite factor
(see e.g. \cite{model_cstar:II}).

In \cite[Theorem 8]{ozawa_dix} Ozawa proved that, for ultraproducts of exact $\mathcal{Z}$-stable \cstar-algebras,
the set of limit traces is weak$^*$-dense in the trace space. Rephrased in the framework of Question \ref{ques:ultra},
what \cite[Theorem 8]{ozawa_dix} shows is that the ultraproduct $(\cM, X)$ of a sequence
of factorial tracially complete \cstar-algebras arising as tracial completions of exact $\mathcal{Z}$-stable \cstar-algebras, satisfies $X = T(\cM)$,
hence in particular it is factorial.
An analogous result is \cite[Proposition 2.5]{CETW}, form which it can be deduced that if $(\cM, X)$ is the tracial ultrapower
of a factorial tracially complete \cstar-algebra which is the tracial
completion of separable \cstar-algebra with \emph{complemented
partitions of unity}, then again $X = T(\cM)$, so $(\cM, X)$ is factorial.
The existence of complemented partitions of unity (usually referred to as \emph{CPoU}) is
a technical condition introduced in \cite[Definition 3.1]{CETWW}, which is automatic
for instance in $\mathcal{Z}$-stabile nuclear \cstar-algebras (\cite[Theorem 3.8]{CETWW}). We finally refer to \cite{APRT} for a recent
and more general account, employing Cuntz semigroup techniques, on when limit traces are weak$^*$-dense in the trace space of (\cstar-norm)
ultraproducts.

CPoU are an extremely
powerful tool in the study of tracially complete \cstar-algebras in \cite{tracially_complete}, effectively dividing these algebras in two subclasses,
a tamer one where the presence of CPoU allows to transfer numerous results and techniques from the theory of von Neumann algebras,
and its complement, much less understood. This paper focuses on the latter, while restricting to $W^*$-bundles.

The following theorem shows that Question \ref{ques:ultra} has affirmative answer for ultraproducts of $W^*$-bundles, even without complemented partitions of unity,
as long as their base spaces have bounded covering dimensions.

\begin{theorem} \label{thm:Wbundle_trace}
Let $( \cM_i )_{i \in I}$ be a sequence of factorial $W^*$-bundles over compact Hausdorff spaces
$K_i$. Suppose there is $d \in \bbN$ such that $\text{dim}(K_i) \le d$
for every $i \in I$. Let $\cM$ be the corresponding ultraproduct, which is a $W^*$-bundle over the ultracoproduct $\sum^\cU K_i$. Then the set of limit traces
is weak$^*$-dense in $T(\cM)$, and in particular $\cM$ is a factorial $W^*$-bundle.
\end{theorem}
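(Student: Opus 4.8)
The plan is to prove the density statement first and then read off factoriality. Write $\mathcal{L} \subseteq T(\cM)$ for the set of limit traces and $\overline{\mathcal{L}}$ for its weak$^*$-closure; since $\overline{\mathcal{L}}$ is convex and weak$^*$-closed, by Hahn--Banach separation it suffices to show that every $\tau \in T(\cM)$ is a weak$^*$-limit of limit traces, i.e.\ that for every finite self-adjoint $F \subseteq \cM$ and every $\e > 0$ there is $\sigma \in \mathcal{L}$ with $|\tau(a) - \sigma(a)| < \e$ for all $a \in F$. I would first exploit the $W^*$-bundle structure: $\cM$ carries a conditional expectation $E \colon \cM \to C(K)$ onto its centre, where $K := \sum^\cU K_i$, the value $E(a)(x) = \tau_x(a)$ records the fibre trace at $x$, and $\tau$ restricts on $C(K)$ to a Radon probability measure $\mu$ over which it disintegrates into the fibre traces $\tau_x$. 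This is the set-up for a localisation: to approximate $\tau$ it is enough to approximate the fibre traces on small pieces of $K$ with controlled overlap.

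The core of the argument is a localisation-and-patching scheme governed by the covering dimension. Using $\dim(K) \le d$, which the ultracoproduct inherits from the bound $\dim(K_i) \le d$, I would choose a finite partition of unity $\{ h_1, \dots, h_n \} \subseteq C(K)$ of multiplicity at most $d+1$, subordinate to a cover by sets so small that each continuous function $x \mapsto E(a)(x)$, $a \in F$, varies by less than $\e$ on every member. On the piece around a point $x_j$ the problem becomes the purely von Neumann algebraic one of approximating the single fibre trace $\tau_{x_j}$ on $F$ by ultralimits of traces on the fibres $(\cM_i)_{x_{j,i}}$, where $(x_{j,i})_i$ represents $x_j$; here one uses that $\cM_{x_j}$ is a tracial ultraproduct $\prod^\cU (\cM_i)_{x_{j,i}}$ of the factor fibres, together with the standard fact that in such an ultraproduct every trace is an ultralimit of the fibre traces. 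Recombining the local approximations with the weights $h_j$ yields a single $\sigma \in \mathcal{L}$; the bounded multiplicity $d+1$ is exactly what keeps the error accumulated from the overlaps under control, and uniform in $i \in I$.

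Once density holds, factoriality comes from the structural picture just obtained. Each fibre $\cM_x$ is the tracial ultraproduct of the factors $(\cM_i)_{x_i}$, hence itself a factor, so the extreme traces of $T(\cM)$ are precisely the fibre traces $\tau_x$, $x \in K$, and these form a weak$^*$-closed set homeomorphic to $K$. Thus $T(\cM)$ has closed extreme boundary, i.e.\ is a Bauer simplex, and $\cM$ is factorial by Theorem \ref{thm:bauer}. Along the way I would invoke Proposition \ref{prop:dixmier} and Lemma \ref{lemma:traces} to justify that extreme traces of the bundle are fibre-concentrated, which is what legitimises the disintegration used above.

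The step I expect to be the main obstacle is the \emph{uniform} identification of the fibre $\cM_{x_j}$ with the ultraproduct $\prod^\cU (\cM_i)_{x_{j,i}}$, together with the two-sided estimate comparing $\lVert \cdot \rVert_{2, x_j}$ on honest sections of $\cM$ with the fibrewise $2$-norms at the $x_{j,i}$. This is precisely where the uniformity of $\dim(K_i) \le d$ is indispensable: it provides partitions of unity of bounded multiplicity simultaneously on all the $K_i$, so that a section realising a prescribed fibre element can be built with norm control independent of $i$. Without such a bound the identification, and with it the density of limit traces, genuinely breaks down, consistently with the counterexample announced in the abstract; the dimension hypothesis is therefore not a convenience but the crux of the matter.
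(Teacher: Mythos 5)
Your outline contains a genuine gap, and it sits at the very first substantive step. You propose to take an arbitrary $\tau \in T(\cM)$, restrict it to $C(K) \subseteq Z(\cM)$ to get a measure $\mu$, and then ``disintegrate'' $\tau$ over $\mu$ into the fibre traces $\tau_x$. But a general trace on a $W^*$-bundle is \emph{not} determined by its restriction to the centre, and the statement that $\tau = \int_K \tau_x\, d\mu(x)$ is exactly equivalent to $\tau$ lying in $\overline{\mathrm{conv}}\{\tau_x : x \in K\}$ --- which is the conclusion of the theorem. The argument is therefore circular. The paper's own counterexample (Section 4) makes this concrete: there one has $p-q$ with $E(p-q)=0$, i.e.\ $\tau_x(p-q)=0$ for every $x$, yet $\sigma(p-q)>1/2$ for some trace $\sigma$; such a $\sigma$ does not disintegrate over the fibre traces. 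The same circularity infects the later steps (``the extreme traces of $T(\cM)$ are precisely the fibre traces'' and the reassembly $\tau(a)=\sum_j \tau(h_j a)$ via local fibre data), and your localisation of the difficulty is also off: the identification of the fibre of $\prod^\cU \cM_i$ at $(x_i)_i$ with $\prod^\cU (\cM_i)_{x_i}$ needs no dimension hypothesis at all.

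The missing idea is a quantitative commutator argument. The paper reduces, via Hahn--Banach, to showing $\sup_{\lambda\in K}|\tau_\lambda(a)| = \sup_{\tau\in T(\cM)}|\tau(a)|$ for $a\in\cM_{sa}$, and obtains this from Lemma \ref{lemma:fdlh_bundles}: in each factorial $W^*$-bundle $\cM_i$ with $\dim(K_i)\le d$, a self-adjoint contraction with $E_i(a)=0$ is $\e$-approximated in $\lVert\cdot\rVert_{2,K_i}$ by $24\sum_{k\le 10d}[w_k,z_k]$, with the number of commutators depending \emph{only on $d$}, not on $\e$ or $i$. This is where the fibres being factors (Fack--de la Harpe in $\pi_\lambda(\cM_i)''$, then Kaplansky density) and the covering dimension (a $(d+1)$-colouring of a finite cover to patch local solutions with a partition of unity of pairwise-disjoint supports in each colour class) actually enter. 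The uniform bound on the number of summands is what survives the ultraproduct: one gets the \emph{exact} identity $a - E^\cU(a) = \sum_{k\le 10d}[w_k,z_k]$ in $\cM$, so every trace on $\cM$ annihilates $a - E^\cU(a)$, giving the desired equality of suprema. Your proposal correctly senses that bounded-multiplicity partitions of unity are the role of the dimension hypothesis, but without routing the argument through commutators (equivalently, the Cuntz--Pedersen nullset) there is no mechanism for controlling the traces that do not come from the centre, and the proof does not close.
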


We also prove that if the uniform bound on the covering dimension of $K_i$ is removed from Theorem \ref{thm:Wbundle_trace}, then its conclusion might fail.
In fact, we show that Question \ref{ques:ultra} has negative answer in general, even when restricted to $W^*$-bundles.
\begin{theorem} \label{thm:up}
There exists a sequence of factorial $W^*$-bundles whose ultraproduct is not factorial.
\end{theorem}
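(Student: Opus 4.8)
The plan is to exploit the gap left open by Theorem \ref{thm:Wbundle_trace}: since a uniform bound on the covering dimension already forces factoriality, any counterexample must live over base spaces $K_i$ with $\dim(K_i)\to\infty$, and the failure must be produced by a topological ``twist'' that only survives inside an infinite-dimensional germ. Concretely, I would construct factorial $W^*$-bundles $\cM_i$ over compact spaces $K_i$ with $\dim(K_i)\to\infty$ and then exhibit a single point $p$ of the ultracoproduct $\sum^\cU K_i$ whose fiber $\cM_p$ is \emph{not} a factor. Since factoriality of $\cM$ means precisely that every fiber is a factor, this suffices. Equivalently, in the language of \S\ref{ss.ultra}, the associated limit trace $\tau_p$ is an extreme point of the weak$^*$-closure $X$ of the limit traces that fails to be extreme in $T(\cM)$, so that $X$ is not a closed face and $\cM$ is not factorial.

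The first reduction is to observe that the obstruction cannot be \emph{global}. A Dixmier-type averaging argument should show that the center of the ultraproduct is always the minimal one, $Z(\cM)=\prod^\cU C(K_i)=C(\sum^\cU K_i)$: if $z=(z_i)_\cU$ were central with $\lim_\cU\norm{z_i-E_i(z_i)}_{2,u}>0$, then at a point $x_i\in K_i$ of near-maximal fiberwise variance one could choose a contraction $b_i$ witnessing the non-centrality of $z_i(x_i)$ in the (factor) fiber and assemble $b=(b_i)_\cU$ with $\lim_\cU\norm{[z_i,b_i]}_{2,u}>0$, a contradiction. A similar argument shows that over any \emph{simple} point $(x_i)_\cU$ the fiber is the tracial ultraproduct $\prod^\cU \cM_{i,x_i}$ of factors, hence again a factor. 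The counterexample must therefore be of the familiar $C(X)$-algebra type, in which all central sections are continuous yet the central projections of one individual fiber fail to extend continuously; the fiber in question must sit over an \emph{exotic} (non-simple) point of $\sum^\cU K_i$, which exists only because the dimensions diverge.

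For the construction I would take the $K_i$ to be highly connected high-dimensional spaces carrying nontrivial topology — for instance iterated products $(S^2)^{\times i}$ or projective spaces — and build $\cM_i$ as a bundle with fiber $\cR$, so that each stage is manifestly factorial, twisted by Villadsen-type data (iterated Bott projections) whose complexity grows with $\dim(K_i)$. The aim is to produce a spectral projection $p_i\in\cM_i$ of fiberwise trace $1/2$ that is topologically nontrivial and arranged so that, spreading the twist across the $\dim(K_i)$ available directions, (i) for every fixed global contraction $b$ the quantity $\norm{[p_i,b_i]}$ becomes negligible in the germ at $p$, while (ii) the fiberwise variance of $p_i$ stays bounded below near $p$. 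As $i\to\infty$ the two spectral halves of $p_i$ then decouple in $2$-norm, so the image of $(p_i)_\cU$ in the fiber $\cM_p$ is a genuine central projection of trace $1/2$ that is not scalar, exhibiting the nontrivial center; crucially this projection does not patch into a continuous central section, consistently with $Z(\cM)=C(\sum^\cU K_i)$.

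The main obstacle is exactly reconciling (i) and (ii). In a factor they are contradictory, since a self-adjoint element that $2$-norm–commutes with all contractions must be close to a scalar by Dixmier averaging; and the content of Theorem \ref{thm:Wbundle_trace} is that over a base of dimension $\le d$ this averaging is \emph{uniform}, with constants governed by $d$ through partitions of unity of multiplicity $\le d+1$. The counterexample must live precisely where this uniform control degrades. I would therefore need to quantify the failure of bundle Dixmier averaging as $\dim(K_i)\to\infty$ and then calibrate the topological twist so that the resulting defect persists in the limit fiber rather than being averaged away. Verifying that the assembled element is genuinely central in $\cM_p$ — not merely approximately central, and not central on so large a set as to force extra \emph{global} center — together with the dimension bookkeeping and the explicit identification of the exotic point $p$, is where I expect essentially all of the technical difficulty to concentrate.
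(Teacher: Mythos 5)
You have written a research program rather than a proof: everything rests on the unconstructed example in your third paragraph, and you concede in the fourth that you do not know how to reconcile conditions (i) and (ii). Your structural analysis is sound and consistent with the paper --- the counterexample must live over base spaces of unbounded dimension, the fibers over genuine limit points $(x_i)_\cU$ are tracial ultraproducts of factors and hence factors, and the center of the ultraproduct is always $C(\sum^\cU K_i)$ (this is Theorem \ref{thm:center}) --- but none of this produces the example. Worse, the construction you propose (bundles with fiber $\cR$ twisted by Villadsen-type data) is of exactly the type the paper flags as open: the remark following Theorem \ref{thm:up} points out that the known counterexample has type I fibers and that Question \ref{ques:ultra} remains open for type II$_1$ tracially complete \cstar-algebras. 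So your plan amounts to proving the theorem by first resolving a harder open problem, and the quantitative step you identify as ``where all the difficulty concentrates'' is precisely the step that is missing.

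The paper avoids all fiberwise analysis by taking an off-the-shelf example: the Pedersen--Petersen $2$-homogeneous algebras $A_n$ of continuous sections of a rank-two bundle over $\mathbb{C}P^n$. Proposition \ref{prop:homo} makes each $A_n$ a factorial $W^*$-bundle over $\hat A_n\cong\mathbb{C}P^n$ with fibers $M_2$, and $2$-homogeneity gives the uniform estimate $\lVert a\rVert\le 2^{1/2}\lVert a\rVert_{2,T(A_n)}$, so the tracial ultraproduct coincides with the \cstar-norm ultraproduct $\cM$ and inherits this norm equivalence. The known obstruction is that the projections $p_n,q_n$ satisfy that $p_n-q_n$ vanishes on every trace yet cannot be approximated by sums of fewer than $n+1$ self-adjoint commutators; hence $p-q$ lies outside the Cuntz--Pedersen nullset of $\cM$, and there is a trace $\sigma$ outside the weak$^*$-closed convex hull $X$ of the limit traces. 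If $X$ were a closed face, Alfsen's strong separation together with the Cuntz--Pedersen realization of affine functions would produce $a\in\cM_+$ with $\sup_{\tau\in X}\tau(a)<1/4$ and $\sigma(a)>1/2$, whence $\lVert a\rVert=\lVert a^{1/2}\rVert^2\le 2\lVert a^{1/2}\rVert_{2,X}^2<1/2<\sigma(a)$, a contradiction. The witness is thus a global, quantitative failure of bounded commutator decompositions across the sequence, not a central non-scalar element exhibited in a single exotic fiber; that is the kind of obstruction your plan would need to supply and does not.
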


The sequence we use for Theorem \ref{thm:up} dates back to \cite{PP}, and it consists of 2-homogeneous \cstar-algebras arising
from certain vector bundles over finite-dimensional complex projective spaces. We remark that such family is the same as the one
considered in \cite{ozawa_dix}
to give an example of an ultraproduct for which the set of limit traces is not weak$^*$-dense in the whole trace space.


Note that the sequence considered in Theorem \ref{thm:up} is composed by $W^*$-bundles whose fibers are matrix
algebras, hence type I. This is in contrast with the primary focus of \cite{tracially_complete} and of most applications of $W^*$-bundles and tracial completions in the literature, which mainly concerns
tracially complete \cstar-algebras whose fibers are infinite-dimensional.
These are referred to as \emph{type II$_1$ tracially complete \cstar-algebras} in \cite{tracially_complete}, and it would be interesting
to know whether Question \ref{ques:ultra} has negative answer also for those algebras.

The final part of the paper has a model theoretic flavor, investigating how the first order theory of a \cstar-algebra can determine
the topological properties of its trace space. A precursor to the result below can be found in \cite[Section 3.5]{modelc},
where it is proved that, for classes of \cstar-algebras where the Cuntz--Pedersen nullset is definable (in the sense of
\cite[Chapter 3]{modelc}), being monotracial is preserved
by elementary equivalence.

\begin{theorem} \label{thm:model}
Let $A,B$ be two unital \cstar-algebras which are exact and $\mathcal{Z}$-stable, or which have strict comparison,
or which belong to any other class
where the Cuntz--Pedersen nullset is definable. Suppose that $A$ is elementarily equivalent to $B$. Then $T(A)$ is a Bauer simplex if and only if 
$T(B)$ is. If moreover both $T(A)$ and $T(B)$ are Bauer simplices and second countable,
then $\partial_e T(A)$ and $\partial_e T(B)$ have the same covering dimension.
\end{theorem}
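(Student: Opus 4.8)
The plan is to reduce both parts of the theorem to statements about the \emph{first-order theory} of the trace space, exploiting that under any of the hypotheses listed (exact and $\mathcal{Z}$-stable, strict comparison, or more generally definability of the Cuntz--Pedersen nullset) the relevant tracial data is encoded in the elementary theory of $A$. Concretely, the Cuntz--Pedersen nullset being definable means that the predicate $a \mapsto \lVert a \rVert_{2,T(A)}$ (equivalently, the seminorm coming from the supremum over all traces) is a definable predicate, uniformly across the class. This is precisely what lets the theory of $A$ ``see'' the trace space, and it is the lever for everything below.

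\emph{Bauer simplices.} First I would recall that $T(A)$ is always a Choquet simplex for a unital \cstar-algebra, so the content is whether the extreme boundary $\partial_e T(A)$ is closed (the defining feature of a Bauer simplex). The key step is to find a \emph{first-order characterization} of the Bauer property in terms of the definable $2$-seminorm. A natural route is via the Dixmier-type property (cf. Proposition \ref{prop:dixmier} in the excerpt): in the presence of the definable nullset, one expects that $T(A)$ is Bauer precisely when the tracial completion $(\cM, T(A))$ is a factorial tracially complete \cstar-algebra, equivalently a $W^*$-bundle, which by the cited consequence of \cite[Theorem 3]{ozawa_dix} corresponds exactly to $T(A)$ being a Bauer simplex. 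One then needs that ``being a $W^*$-bundle'' (or the factoriality of the tracial completion) can be detected by a sentence, or a type, in the continuous logic for \cstar-algebras. The cleanest formulation is to exhibit, for each potential failure of the Bauer property, a quantifier-free-approximable witness that the theory must preserve; elementary equivalence $A \equiv B$ then transports the Bauer property from $A$ to $B$ and back.

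\emph{Covering dimension.} For the second assertion I would use that, when $T(A)$ is a second-countable Bauer simplex, $\partial_e T(A)$ is a compact metrizable space and the tracial completion is a $W^*$-bundle over it. The covering dimension of a compact metrizable space admits a characterization through the existence (or non-existence) of certain partitions of unity with bounded overlap; the plan is to translate ``$\dim \partial_e T(A) \le n$'' into an axiom scheme expressed with the definable $2$-seminorm, so that it becomes part of $\mathrm{Th}(A)$. The bounded-covering-dimension phenomenon central to Theorem \ref{thm:Wbundle_trace} suggests that covering dimension is exactly the kind of invariant that interacts well with the tracial $2$-norm via approximate partitions of unity subordinate to finite open covers, and I would encode the existence of such a partition (with a prescribed number of pieces and prescribed overlap) as an $\exists$-statement over positive contractions whose pairwise products and sums are controlled in $\lVert\cdot\rVert_{2,T(A)}$. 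Having done so, $\dim \partial_e T(A) \le n$ becomes expressible by a set of conditions in the theory, and $A \equiv B$ forces equality of the two dimensions.

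The main obstacle I anticipate is the \emph{uniformity and definability} of these encodings: it is one thing to characterize the Bauer property or the covering dimension abstractly, and quite another to show the characterization is captured by sentences/types that are preserved under elementary equivalence, uniformly across the whole class where the Cuntz--Pedersen nullset is definable. In particular, covering dimension is a priori a countable supremum of conditions (over all finite covers and all refinement parameters), so one must argue that the relevant approximate-partition-of-unity statements are indeed definable predicates and that passing the supremum through elementary equivalence is legitimate; this is where the definability hypothesis must be invoked most carefully, and where I expect the bulk of the technical work to lie.
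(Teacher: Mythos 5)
Your proposal correctly identifies the lever — definability of the Cuntz--Pedersen nullset makes $\lVert\cdot\rVert_{2,T(A)}$ a definable predicate, so the $\cLc$-theory of $A$ determines the $\cLt$-theory of its tracial completion (this is the paper's Lemma \ref{lemma:eetracial}) — but beyond that point the plan diverges from anything that can be carried out as stated, and the key ideas are missing. You propose to \emph{axiomatize} the Bauer property and the covering dimension of $\partial_e T(A)$ directly, via sentences or types in the tracial language, and you yourself flag that producing these axiomatizations is ``where the bulk of the technical work'' lies. That work is the whole problem: the Bauer property is a topological statement about the extreme boundary of $T(A)$, and there is no evident way to witness its failure (a net of extremal traces converging to a non-extremal trace) by elements of the algebra in a first-order fashion; likewise, encoding $\dim\partial_e T(A)\le n$ by approximate partitions of unity measured in $\lVert\cdot\rVert_{2,T(A)}$ would require those partitions to be located in (or near) the center of the tracial completion, which your sketch does not address. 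As written, the proposal is a research plan whose hard steps are exactly the ones left open.

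The paper avoids axiomatization entirely. It applies the Keisler--Shelah theorem to the elementarily equivalent $\cLt$-structures to obtain a $2$-norm-isometric isomorphism $(A^\cU,T(A)^\cU)\cong(B^\cU,T(B)^\cU)$, so one only needs that the two invariants are \emph{determined by the ultrapower}, not that they are expressible in the theory. For the Bauer property this requires two structural facts: (i) if $T(A)$ is Bauer then $T(A)^\cU$ is Bauer (the ultrapower is a $W^*$-bundle over $(\partial_e T(A))^\cU$, and a $2$-norm-isometric isomorphism carries the distinguished trace simplex to the distinguished trace simplex by a Hahn--Banach argument, Lemma \ref{lemma:iso_simplex}); and (ii) the converse descent: any $\tau\in\overline{\partial_e T(B)}$ is approximated by limit traces built from extremal traces, and such limit traces are extremal in $T(B^\cU)$ because an ultraproduct of factors is a factor — so Bauer-ness of $T(B)^\cU$ forces $\partial_e T(B)$ to be closed. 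For the dimension statement the paper does not touch partitions of unity at all: it uses the tracial Dixmier property for factorial $W^*$-bundles to prove $Z(\prod^\cU\cM_i)=\prod^\cU Z(\cM_i)$ (Theorem \ref{thm:center}), deduces $C(\partial_e T(A))\equiv C(\partial_e T(B))$ from $Z(A^\cU)\cong Z(B^\cU)$, and then invokes the known facts that covering dimension equals decomposition rank for second countable spaces and that decomposition rank is definable by uniform families of formulas for commutative \cstar-algebras. You would need to supply substitutes for all of these steps — in particular the center-of-the-ultrapower computation and the extremality of limit traces of extremal traces — before your outline becomes a proof.
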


The proof of Theorem \ref{thm:model} uses a tracial analogue of Dixmier's avergaing property for
factorial $W^*$-bundles (Proposition \ref{prop:dixmier}), which permits us to show that the center of the ultraproduct of a sequence of factorial
$W^*$-bundles is isomorphic to the ultraproduct of the centers (Theorem \ref{thm:center}).

The paper is structured as follows. In \S\ref{s.pre} we present definitions and preliminaries needed in later sections, \S\ref{s.fd} is devoted
to the proof of Theorem \ref{thm:Wbundle_trace}, while \S\ref{s.up} is where Theorem \ref{thm:up} is proved. Finally,
\S\ref{s.mt} contains the background and proofs needed to show Theorem \ref{thm:model}

\begin{acknow}
{I am grateful to the authors of \cite{tracially_complete} for sharing with me a preliminary version of their work and for allowing me
to include some parts of it in the present paper. I would moreover like to thank them for some helpful feedback on an early draft of this manuscript. Finally, I wish to thank the anonymous referee for carefully reading this note and for their useful remarks.}
\end{acknow}

\section{Preliminaries} \label{s.pre}

Given a \cstar-algebra $A$, denote by $A_1$, $A_{sa}$ and $A_+$ respectively the set of all contractions, self-adjoint and positive elements
in $A$. We let $Z(A)$ denote the center of $A$ and, given $a,b \in A$, we abbreviate the commutator $ab - ba$ as $[a,b]$.

The \emph{trace space} $T(A)$ of $A$ is the set of all tracial states on $A$, which we refer to simply as \emph{traces}.
For $\tau \in T(A)$ define the 2-seminorm
$\lVert \cdot \rVert_{2, \tau}$ on $A$ as
\[
\lVert a \rVert_{2,\tau} = \tau(a^*a)^{1/2}, \text{ for every } a \in A.
\]
Given a non-empty $X \subseteq T(A)$, the 2-seminorm on $A$ associated to $X$ is
\[
\lVert a \rVert_{2, X} = \sup_{\tau \in X} \lVert a \rVert_{2,\tau}, \text{ for every } a \in A.
\]
Note that $\lVert \cdot \rVert_{2, X} = \lVert \cdot \rVert_{2, \overline{\text{conv}}(X)}$, where $\overline{\text{conv}}(X)$ is the closed convex hull
of $X$. Given a convex set $X$, we let $\partial_e X$ denote the set of extreme points of $X$. The equality $\lVert \cdot \rVert_{2, X} = \lVert \cdot \rVert_{2, \partial_e X}$ follows by the Krein--Milman Theorem.

Given a unital \cstar-algebra $A$, the trace space $T(A)$, as well as any other of its closed, convex subsets $X$, is a \emph{Choquet simplex}
(\cite[Section 3]{alfsen}). In particular, for every $x \in X$ there exists a unique boundary measure $\mu_x$ (in the sense of \cite[Proposition I.4.5]{alfsen})
such that $f(x) = \int_X f(t) d\mu_x$ for every continuous affine function $f \colon X \to \bbR$.
A \emph{Bauer simplex} $X$ is a Choquet simplex such that $\partial_e X$ is closed.

\subsection{$W^*$-bundles} \label{ss.wbundle}
\begin{definition}[{\cite[Section 5]{ozawa_dix}}] \label{def:wbundle}
A \emph{$W^*$-bundle over (a compact Hausdorff space) $K$} is a unital \cstar-algebra $\cM$, with a unital embedding of $C(K)$
in the center of $\cM$ and a faithful unital conditional expectation $E \colon \cM \to C(K)$ such that
\begin{enumerate}
\item $E$ is tracial, that is $E(ab) = E(ba)$ for all $a,b \in \cM$,
\item \label{wbundle:i2} the \cstar-norm unit ball of $\cM$ is complete with respect of the norm $\lVert \cdot \rVert_{2,K}$ induced by $E$, defined as $\lVert a \rVert_{2, K} = \lVert
E(a^*a) \rVert^{1/2}$, for all $a \in \cM$.
\end{enumerate}
\end{definition}

Let $\cM$ be a $W^*$-bundle over $K$ with conditional expectation $E \colon \cM \to C(K)$, and fix $\lambda
\in K$. Throughout the paper we shall denote by $\tau_\lambda$ the trace $\text{ev}_\lambda \circ E$ and by $\pi_\lambda$ the
GNS-representation corresponding to $\tau_\lambda$. The von Neumann algebra $\pi_\lambda(\cM)''$ is the \emph{fiber} corresponding
to $\lambda$.

More generally, every regular Borel probability measure $\mu$ over $K$ naturally
induces a trace $\tau_\mu$ defined for $a \in \cM$ as
\[
\tau_\mu(a) = \int_K E(a) \, d\mu.
\]
Let $X = \{ \tau_\mu \mid \mu \in \text{Prob}(K) \}$. Faithfulness of $E$ entails that $\lVert \cdot \rVert_{2,X}$ is a norm on $\cM$,
while item \ref{wbundle:i2} of Definition \ref{def:wbundle} implies that $(\cM, X)$
is a tracially complete \cstar-algebra, since $\lVert \cdot \rVert_{2,X} = \lVert \cdot \rVert_{2,\partial_e X} = \lVert \cdot \rVert_{2,K}$.
We sometimes identify $X$ and $\partial_e X$ with $ \text{Prob}(K)$ and $K$ respectively. Note that the notations
$\lVert \cdot \rVert_{2,\partial_e X}$ and $\lVert \cdot \rVert_{2,K}$ are consistent with this identification.
We always implicitly consider the $W^*$-bundle $\cM$ as a tracially complete \cstar-algebra, in particular we say that $\cM$ is \emph{factorial} if the pair
$(\cM, \text{Prob}(K))$ is a factorial tracially complete \cstar-algebra.

The following proposition isolates some useful reformulations of factoriality. Its statement, as well as its proof, originates from some analogous statements appearing in an
early version of \cite{tracially_complete}.
\begin{proposition} \label{prop:factor}
Let $(\cM, X)$ be a tracially complete \cstar-algebra. The following conditions are equivalent.
\begin{enumerate}
\item $(\cM, X)$ is factorial.
\item $\partial_e X \subseteq \partial_e T(\cM)$.
\item $\pi_\lambda(\cM)''$ is a factor for every $\lambda \in \partial_e X$.
\end{enumerate}
\end{proposition}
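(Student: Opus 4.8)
The plan is to prove the two equivalences $(1)\Leftrightarrow(2)$ and $(2)\Leftrightarrow(3)$ separately. The implication $(1)\Rightarrow(2)$ is immediate from the definition of a face: if $X$ is a face of $T(\cM)$ and $\lambda\in\partial_e X$, then any expression $\lambda=\tfrac12(\mu+\nu)$ with $\mu,\nu\in T(\cM)$ forces $\mu,\nu\in X$, whence $\mu=\nu=\lambda$ by extremality of $\lambda$ in $X$; thus $\lambda\in\partial_e T(\cM)$.

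For the converse $(2)\Rightarrow(1)$, which I regard as the heart of the matter, I would argue through boundary measures, exploiting that both $X$ and $T(\cM)$ are Choquet simplices and hence that every point admits a \emph{unique} boundary (equivalently, maximal) measure. Fix $x\in X$ and let $\mu_x$ be its boundary measure computed in $X$; it is pseudo-supported on $\partial_e X$. Assuming (2), i.e. $\partial_e X\subseteq\partial_e T(\cM)$, the pushforward of $\mu_x$ to $T(\cM)$ is pseudo-supported on $\partial_e T(\cM)$ and still represents $x$, so by uniqueness it is also the boundary measure of $x$ in $T(\cM)$; in particular it is carried by the closed set $X$. Now given a decomposition $x=t\mu+(1-t)\nu$ with $t\in(0,1)$ and $\mu,\nu\in T(\cM)$, let $\mu_\mu$ and $\mu_\nu$ be the boundary measures of $\mu$ and $\nu$ in $T(\cM)$; then $t\mu_\mu+(1-t)\mu_\nu$ is pseudo-supported on $\partial_e T(\cM)$ and represents $x$, so uniqueness yields $t\mu_\mu+(1-t)\mu_\nu=\mu_x$. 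Since $\mu_x$ is carried by $X$ and $t,1-t>0$, both $\mu_\mu$ and $\mu_\nu$ are carried by $X$, and hence their barycenters $\mu$ and $\nu$ lie in the closed convex set $X$. This shows $X$ is a face; being also closed, $(\cM,X)$ is factorial. The step I expect to be most delicate is precisely this identification of boundary measures across $X$ and $T(\cM)$: in a possibly non-metrizable trace space a maximal measure need not be literally carried by the extreme boundary, so the arguments about ``pseudo-support'' and about passing from $\partial_e X$ to $X$ must be phrased in terms of Baire sets, as in \cite[Section I.4]{alfsen}. (Alternatively one may simply invoke the standard fact that a closed convex subset of a simplex is a face exactly when its extreme points are extreme in the ambient simplex.)

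Finally, for $(2)\Leftrightarrow(3)$ I would use the classical correspondence between extreme traces and factorial GNS representations. For a trace $\lambda$ on a unital \cstar-algebra, traces dominated by a scalar multiple of $\lambda$ correspond bijectively to positive elements of the center $Z(\pi_\lambda(\cM)'')$; consequently a nontrivial central projection produces a nontrivial convex decomposition of $\lambda$, and conversely any nontrivial decomposition of $\lambda$ produces a nontrivial central element. Hence $\lambda\in\partial_e T(\cM)$ if and only if $\pi_\lambda(\cM)''$ is a factor. Reading this equivalence over all $\lambda\in\partial_e X$ turns condition (3) verbatim into the inclusion $\partial_e X\subseteq\partial_e T(\cM)$, which is (2), completing the cycle.
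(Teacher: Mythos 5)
Your treatment of (1)$\Rightarrow$(2) and of (2)$\Leftrightarrow$(3) is exactly the paper's: both are dispatched in one line, the latter by the classical correspondence between extremality of a trace and factoriality of the GNS bicommutant. For the substantive implication (2)$\Rightarrow$(1) you take a genuinely different route. The paper stays measure-free and essentially finite-dimensional: any point of $F=\mathrm{conv}(\partial_e X)$ lies in $\mathrm{conv}(x_1,\dots,x_n)$ for finitely many $x_j\in\partial_e X\subseteq\partial_e T(\cM)$; the closed convex hull of a compact subset of $\partial_e T(\cM)$ is a face of the Choquet simplex $T(\cM)$ by \cite[Corollary 11.1.19]{goodearl}, so $F$ is a face; and $X=\overline{F}$ is then a face by \cite[Proposition 4.4]{roy}, which says that the closure of a face of a Choquet simplex is again a face. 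Your route through uniqueness of maximal representing measures is more conceptual, but the step you yourself flag is indeed where it is thinnest --- in fact thinner than you suggest.

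Concretely: once you know that the pushforward of $\mu_x$ is pseudo-supported on $\partial_e T(\cM)$ and represents $x$, you invoke ``uniqueness'' to conclude that it equals the boundary measure of $x$ in $T(\cM)$. But what is unique in a simplex is the \emph{maximal} representing measure, and in a non-metrizable compact convex set pseudo-support on the extreme boundary (vanishing on Baire sets disjoint from it) is a necessary but not a sufficient condition for maximality. So the appeal to uniqueness is not licensed as written; what you actually need is that a measure maximal on $X$ pushes forward to a measure maximal on $T(\cM)$, and proving this (say via Mokobodzki's criterion $\mu(f)=\mu(\hat f)$) runs into the same difficulty, since the upper envelope of a convex function computed in $T(\cM)$ dominates the one computed in $X$ and the set where they differ is closed but not obviously Baire. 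The second half of your argument (a convex combination of maximal measures is maximal; a probability measure carried by a closed convex set has barycenter in that set) is fine. Your parenthetical fallback --- citing the standard fact that a closed convex subset of a simplex whose extreme points are extreme in the ambient simplex is a face --- does close the gap, but note that this fact \emph{is} the implication (2)$\Rightarrow$(1); the paper's argument via \cite{goodearl} and \cite{roy} is precisely a proof of it.
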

\begin{proof}
(1) $\Rightarrow$ (2) is true since every extreme point of a face must be extreme in the simplex itself.

For (2) $\Rightarrow$ (1), let $F = \text{conv}(\partial_e X)$ be the convex hull of $\partial_e X$. Given $x \in F$ then $x \in \text{conv}(x_1, \dots, x_n)$ for some $x_1, \dots, x_n \in \partial_e X$. The
set $\text{conv}(x_1, \dots, x_n)$ is the (closed) convex hull of a compact subset of $\partial_e X \subseteq \partial_e T(\cM)$, hence it is a face
of $T(\cM)$ by \cite[Corollary 11.1.19]{goodearl}. It follows that if $x = \lambda y + (1-\lambda)z$ for $\lambda \in (0,1)$ and
$y,z \in T(\cM)$, then $y,z \in \text{conv}(x_1, \dots, x_n) \subseteq F$, hence $F$ is a face of $T(\cM)$. Finally, $X = \overline{F}$ is face by \cite[Proposition 4.4]{roy}.

(2) $\Leftrightarrow$ (3) follows by the well-known fact that,
for $\tau \in T(\cM)$ and $\pi_\tau$ the corresponding GNS-representation, $\pi_\tau(\cM)''$ is a factor if and only if $\tau \in \partial_e T(\cM)$.
\end{proof}

Summarizing, a $W^*$-bundle $\cM$ over $K$ gives rise to a tracially complete \cstar-algebra $(\cM, X)$ where $X$ is a Bauer simplex whose boundary is homeomorphic to $K$.
The converse also holds in the factorial case: every factorial tracially complete \cstar-algebra whose base space is a Bauer simplex can be naturally endowed
with a $W^*$-bundle structure. This fact is direct consequence of \cite[Theorem 3]{ozawa_dix}, and it is stated and proved below in a form which 
is due to \cite{tracially_complete}.
\begin{theorem}[{\cite[Theorem 3]{ozawa_dix}, \cite{tracially_complete}}] \label{thm:bauer}
Let $(\cM, X)$ be a factorial tracially complete \cstar-algebra such that $X$ is a Bauer simplex.
Then there exists an embedding $\theta \colon C(\partial_e X) \to Z(\cM)$ such that 
\[
\tau(\theta(f)a) = \int_{\partial_e X} f(\sigma) \sigma(a) \, d\mu_\tau(\sigma), \text{ for every } \tau \in X, f \in C(\partial_e X), a \in \cM.
\]
Moreover $(\cM, X)$ can be endowed with the structure of a $W^*$-bundle over $\partial_e X$ with the conditional
expectation $E \colon \cM \to C(\partial_e X)$ defined as $E(a)(\tau) = \tau(a)$ for $\tau \in \partial_e X$.
\end{theorem}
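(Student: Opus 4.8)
The plan is to realize $Z(\cM)$ explicitly as $C(\partial_e X)$ and to read off both the embedding $\theta$ and the conditional expectation $E$ from this identification. Write $K = \partial_e X$, which is compact since $X$ is Bauer. The starting observation is that factoriality forces central elements to be scalar on each extreme fiber: by Proposition \ref{prop:factor} every $\sigma \in K$ yields a GNS representation $\pi_\sigma$ with $\pi_\sigma(\cM)''$ a factor, and for $z \in Z(\cM)$ the operator $\pi_\sigma(z)$ commutes with $\pi_\sigma(\cM)$, hence lies in $Z(\pi_\sigma(\cM)'') = \C 1$, so $\pi_\sigma(z) = \sigma(z) 1$. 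I would package this into a map $\Phi \colon Z(\cM) \to C(K)$, $\Phi(z)(\sigma) = \sigma(z)$, which is well defined since $\sigma \mapsto \sigma(z)$ is weak$^*$-continuous on $K$.

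The technical core I would isolate first is that on $Z(\cM)$ the \cstar-norm, the $2$-norm, and $\sup_{\sigma \in K}\lvert\sigma(\cdot)\rvert$ all coincide, and that $\Phi$ is multiplicative. Both follow from $\pi_\sigma(z) = \sigma(z)1$: for self-adjoint central $z$ one gets $\norm{z}_{2,\sigma}^2 = \sigma(z^2) = \sigma(z)^2$, so $\norm{z}_{2,X} = \sup_K\lvert\sigma(z)\rvert$; applying functional calculus $h(z) \in Z(\cM)$ together with $\sigma(h(z)) = h(\sigma(z))$ rules out spectrum of $z$ outside $[-\norm{z}_{2,X},\norm{z}_{2,X}]$, giving $\norm{z}_{C^*} = \norm{z}_{2,X}$, and the general case follows by passing to $z^*z$. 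Multiplicativity $\sigma(zw) = \sigma(z)\sigma(w)$ and, more generally, $\sigma(za) = \sigma(z)\sigma(a)$ for $a \in \cM$ are immediate from $\pi_\sigma(z)=\sigma(z)1$. Consequently $\Phi$ is an isometric unital $*$-homomorphism, so its image $B = \Phi(Z(\cM))$ is a unital \cstar-subalgebra of $C(K)$, in particular uniformly closed.

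It then remains to prove $B = C(K)$, and here is where I expect the real work to lie. By Stone--Weierstrass it suffices that $B$ separate the points of $K$, i.e. that for $\sigma_1 \ne \sigma_2$ in $K$ some central element distinguish them. I would produce such central elements by Dixmier-type averaging: for $a \in \cM_{sa}$ every $\tau \in X$ is constant on the unitary orbit of $a$, and the content of \cite[Theorem 3]{ozawa_dix} is that the $\norm{\cdot}_{2,X}$-closed convex hull of $\{uau^* : u \in \cU(\cM)\}$, which sits inside the complete unit ball, meets $Z(\cM)$; by $\norm{\cdot}_{2,X}$-continuity of each $\tau$ the resulting $z \in Z(\cM)$ satisfies $\sigma(z) = \sigma(a)$ for all $\sigma \in K$. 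Since self-adjoint elements of $\cM$ already separate the traces in $K$, the associated central elements separate points and $B = C(K)$, so $\theta := \Phi^{-1}$ is the desired embedding. The main obstacle is precisely this averaging step---guaranteeing that enough central elements exist inside the merely tracially complete algebra $\cM$---which is the Dixmier approximation phenomenon of Ozawa; the rest of the argument is soft.

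Finally I would derive the integral formula and the bundle structure. For $\tau \in X$ the boundary measure $\mu_\tau$ is supported on $K$ and represents the continuous affine function $\sigma \mapsto \sigma(b)$, so $\tau(b) = \int_K \sigma(b)\,d\mu_\tau(\sigma)$ for every $b \in \cM$; taking $b = \theta(f)a$ and using $\sigma(\theta(f)a) = f(\sigma)\sigma(a)$ gives $\tau(\theta(f)a) = \int_K f(\sigma)\sigma(a)\,d\mu_\tau(\sigma)$. Setting $E(a)(\sigma) = \sigma(a)$ defines a unital positive contraction $\cM \to C(K) \cong \theta(C(K))$ with $E \circ \theta = \mathrm{id}_{C(K)}$; it is tracial since each $\sigma \in K$ is a trace, a $C(K)$-bimodule map because $E(\theta(f)a) = f\cdot E(a)$ by the multiplicativity above, and faithful because $\norm{E(a^*a)} = \sup_\sigma \sigma(a^*a) = \norm{a}_{2,X}^2$. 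Since $\norm{a}_{2,K}^2 = \norm{E(a^*a)} = \norm{a}_{2,X}^2$, completeness of the unit ball in $\norm{\cdot}_{2,K}$ is exactly the tracial completeness hypothesis, so $(\cM,E)$ is a $W^*$-bundle over $K = \partial_e X$.
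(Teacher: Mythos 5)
Your argument is correct and ultimately rests on the same external input as the paper's proof: Ozawa's Dixmier-type averaging result \cite[Theorem 3]{ozawa_dix} is what produces enough central elements, and you correctly identify this as the only non-soft step. The difference is one of packaging. The paper quotes \cite[Theorem 3]{ozawa_dix} wholesale for the existence of $\theta$ together with the integral formula (adding only the remark that the metrizability hypothesis there is used solely to make $\partial_e X$ Borel, which is automatic when $X$ is Bauer --- a caveat you should record if you invoke that theorem in the non-metrizable setting), and then simply sets $E(a)=\theta(\hat a)$ and checks that the induced $2$-norm agrees with $\lVert\cdot\rVert_{2,X}$. You instead reconstruct $\theta$ as the inverse of the evaluation map $\Phi\colon Z(\cM)\to C(\partial_e X)$, proving $\Phi$ is an isometric $*$-isomorphism (injectivity and the norm identities from factoriality of the fibers via Proposition \ref{prop:factor}, surjectivity from averaging plus Stone--Weierstrass) and then deriving the integral formula from the barycentric decomposition of $\tau \in X$ over the closed extreme boundary. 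This buys slightly more than the statement asks for --- the identification $Z(\cM)\cong C(\partial_e X)$, which the paper only uses later and without detailed proof --- at the cost of verifying the soft steps yourself; those verifications (multiplicativity $\sigma(za)=\sigma(z)\sigma(a)$ for $\sigma\in\partial_e X$ and $z$ central, the coincidence of the norms on $Z(\cM)$, faithfulness, traciality and the bimodule property of $E$, and the equality $\lVert\cdot\rVert_{2,K}=\lVert\cdot\rVert_{2,X}$ that turns tracial completeness into condition (2) of Definition \ref{def:wbundle}) are all correct as written.
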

\begin{proof}
The existence of an embedding of $\theta\colon C(\partial_e X) \to Z(\cM)$ as claimed in the statement is a consequence of \cite[Theorem 3]{ozawa_dix}. The latter result is proved for $X$
metrizable face (which appears as $S$ in the notation of \cite{ozawa_dix}), where metrizability is required only to make sure that $\partial_e X$ is Borel. In our context this is automatic, since $\partial_e X$ is assumed to be closed. Given $a \in \cM$, let $\hat a \in C(\partial_e X)$ be defined as $\hat a(\tau) = \tau(a)$ for every $\tau \in \partial_e X$.
Let $E$ be the conditional expectation $E(a) = \theta(\hat a)$.
It is immediate to check that the 2-norm induced by $E$ is the same as $\lVert \cdot \rVert_{2,\partial_e X} = \lVert \cdot \rVert_{2, X}$.
By assumption $\cM_1$ is $\lVert \cdot \rVert_{2, X}$-complete, making $\cM$ a $W^*$-bundle over $\partial_e X$.
\end{proof}

\subsection{Ultrapowers and ultraproducts of $W^*$-bundles} \label{ss.ultra}
Fix an infinite index set $I$ and a free ultrafilter $\cU$ over $I$. Let $( (A_i, X_i)  \mid {i \in I})$ be a sequence of pairs where
$A_i$ is a unital \cstar-algebra and $X_i$ is a non-empty subset of $T(A_i)$ for every $i \in I$. The \emph{tracial ultraproduct} of such
sequence is the \cstar-algebra
\[
\prod\nolimits^\cU (A_i, X_i) = \frac{\prod\nolimits_{i \in I} A_i }{\{ (a_i)_{i \in I} \in  \prod\nolimits_{i \in I} A_i  \mid \lim_{i \to \cU}
\lVert a_i \rVert_{2, X_i} = 0 \}}.
\]
In case of a constant sequence $(A_i, X_i) = (A, X)$, we use the notation $A^\cU_X$ and refer to this \cstar-algebra as the
\emph{tracial ultrapower} of $(A,X)$. We write $A^\cU$ if $X = T(A)$.
Throughout this paper we shall only consider cases where $\lVert \cdot \rVert_{2, X_i}$ is a norm for every $i \in I$.

We drop $X_i$ from the notation when it is clear from the context and simply write $\prod^\cU A_i$. For instance,
let $( \cM_i )_{i \in I}$ be $W^*$-bundles over $K_i$. Then by $\prod^\cU \cM_i$ we mean
the tracial ultraproduct of the sequence $((\cM_i, K_i) \mid {i \in I})$.

Given $( (A_i, X_i)  \mid {i \in I})$,
every sequence of traces $\bar \tau= (\tau_i)_{i \in I} \in \prod_{i \in I} X_i$ determines a trace on $\prod^\cU A_i$ defined on
each representing sequence as
\[
\bar \tau((a_i)_{i \in I}) = \lim_{i \to \cU} \tau_i(a_i).
\]
We denote by $\prod_\cU X_i$ the set of traces which arise in this manner, and we refer to them as \emph{limit traces}. $\prod_\cU X_i$
corresponds to the set-theoretic ultraproduct of $( X_i )_{i \in I}$. This is a convex, not necessarily closed, subset of the trace space of $\prod^\cU A_i$. Let $\sum^\cU X_i$ denote its weak$^*$-closure. In case $X_i = X$ for every $i \in I$, we abbreviate such closure
as $X^\cU$. When every $X_i$ is compact (e.g. in the case of $W^*$-bundles),
the space $\sum^\cU X_i$ can also be obtained as the \emph{ultracoproduct} of the sequence $(X_i )_{i \in I}$, namely
the compact Hausdorff space such that $C(\sum^\cU X_i) \cong \prod_\cU C(X_i)$, where the latter (with $\cU$ in subscript) is the canonical
\cstar-norm ultraproduct.

The unit ball of the tracial ultraproduct $\prod^\cU A_i$ is complete with respect to the 2-norm $\lVert \cdot \rVert_{\sum^\cU X_i}$
(see \S\ref{ss.mt}), in particular for ultraproducts
of $W^*$-bundles we have the following.

\begin{proposition}[{\cite[Proposition 3.9]{BBSTWW}}] \label{prop:ultra_w}
Let $(\cM_i )_{i \in I}$ be a sequence of $W^*$-bundles over $K_i$ with conditional expectations $E_i \colon \cM_i \to C(K_i)$.
The tracial ultraproduct $\prod^\cU \cM_i$ is a $W^*$-bundle over $\sum^\cU K_i$, with the conditional expectation
\begin{align*}
E^\cU \colon \prod\nolimits^\cU \cM_i &\to C\Big (\sum\nolimits^\cU K_i\Big ) \\
(a_i)_{i \in I} &\mapsto (E_i(a_i))_{i \in I}
\end{align*}
inducing the norm $\lVert \cdot \rVert_{\sum^\cU K_i}$.
\end{proposition}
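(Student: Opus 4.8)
The plan is to verify each clause of Definition \ref{def:wbundle} for the triple $(\prod^\cU \cM_i,\, C(\sum^\cU K_i),\, E^\cU)$, drawing the completeness of the unit ball from the general fact recorded just before the proposition. Throughout I identify $C(\sum^\cU K_i)$ with the \cstar-norm ultraproduct $\prod_\cU C(K_i)$, as dictated by the definition of the ultracoproduct. First I would check that $E^\cU$ is well defined, i.e. that the sequence-level map $(a_i)_{i \in I} \mapsto (E_i(a_i))_{i \in I}$ carries the kernel defining $\prod^\cU \cM_i$ into the kernel defining $\prod_\cU C(K_i)$. The key input is the Schwarz inequality for the conditional expectations, $E_i(a_i)^* E_i(a_i) \le E_i(a_i^* a_i)$, which yields the pointwise bound $\lVert E_i(a_i) \rVert \le \lVert E_i(a_i^* a_i) \rVert^{1/2} = \lVert a_i \rVert_{2, K_i}$. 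Consequently, if $\lim_{i \to \cU} \lVert a_i \rVert_{2, K_i} = 0$ then $\lim_{i \to \cU} \lVert E_i(a_i) \rVert = 0$, so $(E_i(a_i))_{i \in I}$ represents $0$ in the \cstar-ultraproduct and $E^\cU$ descends to the quotient; the same bound shows $E^\cU$ is contractive, while positivity is inherited fiberwise. For the inclusion of $C(\sum^\cU K_i)$ into $Z(\prod^\cU \cM_i)$, note that since $E_i$ restricts to the identity on $C(K_i)$ one has $\lVert f_i \rVert_{2, K_i} = \lVert f_i \rVert$ for $f_i \in C(K_i)$, so $(f_i)_{i \in I} \mapsto (f_i)_{i \in I}$ is a well-defined isometric embedding of $\prod_\cU C(K_i)$ whose image is central, each $f_i$ being central in $\cM_i$.

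Next I would verify that $E^\cU$ is a faithful unital tracial conditional expectation onto $C(\sum^\cU K_i)$. Unitality is immediate from $E_i(1) = 1$, and $E^\cU$ fixes $\prod_\cU C(K_i)$ pointwise because each $E_i$ does, so it is a conditional expectation. Traciality follows fiberwise, $E^\cU((a_i)(b_i)) = (E_i(a_i b_i)) = (E_i(b_i a_i)) = E^\cU((b_i)(a_i))$, from traciality of each $E_i$. For faithfulness, if $E^\cU(a^* a) = 0$ with $a = (a_i)$ then $\lim_{i \to \cU} \lVert E_i(a_i^* a_i) \rVert = 0$, i.e. $\lim_{i \to \cU} \lVert a_i \rVert_{2, K_i} = 0$, which says precisely that $a = 0$ in $\prod^\cU \cM_i$. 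Finally, the 2-norm induced by $E^\cU$ is $\lVert a \rVert_{2, \sum^\cU K_i} = \lVert E^\cU(a^* a) \rVert^{1/2} = \lim_{i \to \cU} \lVert E_i(a_i^* a_i) \rVert^{1/2} = \lim_{i \to \cU} \lVert a_i \rVert_{2, K_i}$, which is exactly the 2-norm used to form the tracial ultraproduct; this identifies the induced norm as the claimed $\lVert \cdot \rVert_{\sum^\cU K_i}$, and the faithfulness just checked re-expresses that this seminorm is in fact a norm.

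It remains to establish completeness of the \cstar-norm unit ball of $\prod^\cU \cM_i$ with respect to $\lVert \cdot \rVert_{\sum^\cU K_i}$, which is item \ref{wbundle:i2} of Definition \ref{def:wbundle}. Since the preceding computation identifies $\lVert \cdot \rVert_{\sum^\cU K_i}$ with the ultraproduct 2-norm $\lVert \cdot \rVert_{\sum^\cU X_i}$ for $X_i = \text{Prob}(K_i)$, this is an instance of the general completeness statement for tracial ultraproducts recorded before the proposition, and no further work is needed. I do not expect any step above to present serious difficulty once the Schwarz bound is in hand; the one genuinely delicate point is hidden inside that cited completeness fact, and it is a diagonalization. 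Were it not available, one would take a $\lVert \cdot \rVert_{\sum^\cU K_i}$-Cauchy sequence $(b^{(n)})_n$, pass to a subsequence with $\lVert b^{(n+1)} - b^{(n)} \rVert_{\sum^\cU K_i}$ summable, choose representatives in the unit balls $(\cM_i)_1$, and combine the fiberwise $\lVert \cdot \rVert_{2, K_i}$-completeness of each $\cM_i$ with a depth function $f(i) = \sup\{ n : i \in B_n \}$, where the $B_n \in \cU$ are the decreasing index sets on which the telescoping estimates hold, to assemble a limit $b = (b_i)$ with $\lVert b - b^{(n)} \rVert_{\sum^\cU K_i} \to 0$. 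Balancing the two indices $n$ and $i$ against the ultrafilter is the only subtle maneuver, and it is precisely what the cited result packages away.
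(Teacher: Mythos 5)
Your argument is correct and complete: the Schwarz inequality gives well-definedness of $E^\cU$ on the quotient, the remaining axioms of Definition \ref{def:wbundle} pass fiberwise, faithfulness is exactly the definition of the tracial (rather than \cstar-norm) ultraproduct, and completeness of the unit ball is the general metric-ultraproduct fact recorded before the statement. The paper gives no proof of its own here --- it simply cites \cite[Proposition 3.9]{BBSTWW} --- and your proof is the standard argument that reference carries out, so there is nothing to reconcile.
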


\section{$W^*$-Bundles with Finite-Dimensional Base Space} \label{s.fd}
Despite employing different techniques, both \cite[Theorem 8]{ozawa_dix} and \cite[Proposition 2.5]{CETW} 
are proved by showing first that elements which
are small with respect of all traces can be approximated with sums of commutators, with the number of summands not changing,
or at least being kept under control, as the precision of the approximation varies.
Our Theorem \ref{thm:Wbundle_trace} is no exception, and is based on the following lemma.

\begin{lemma} \label{lemma:fdlh_bundles}
Let $\cM$ be a factorial $W^*$-bundle over the space $K$.
Suppose that $K$ has finite covering dimension $d$, and let $a \in \cM_{sa}$ be a contraction such that $E(a) = 0$.
Then for ever $\e > 0$ there exist contractions $w_i, z_i \in \cM$ for $i = 1, \dots, 10d$ such that
\[
\Big \lVert a -  24 \sum_{i \le 10d} [w_i, z_i] \Big\rVert_{2,K} < \e.
\]
\end{lemma}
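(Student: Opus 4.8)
The plan is to reduce to a fiberwise commutator decomposition and then globalize it with a partition of unity whose combinatorial complexity is controlled by $\dim(K) \le d$.

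I would start from the elementary observation that drives every such argument: if $p, q$ are orthogonal projections and $v$ is a partial isometry with $v^*v = q$ and $vv^* = p$, then $p - q = vv^* - v^*v = [v, v^*]$ is a single commutator of the contraction $v$. Working in a fixed fiber $\pi_\lambda(\cM)''$, which is a \emph{factor} by factoriality of the bundle, the hypothesis $E(a) = 0$ gives $\tau_\lambda(\pi_\lambda(a)) = \text{ev}_\lambda(E(a)) = 0$, so $\pi_\lambda(a)$ is a trace-zero self-adjoint contraction. Using that a factor contains projections, and partial isometries between them, of every admissible trace, I would approximate $\pi_\lambda(a)$ in $\lVert \cdot \rVert_{2,\tau_\lambda}$ by a self-adjoint combination of orthogonal equivalent-projection differences, and then \emph{bundle the whole spectral staircase into a fixed number of commutators} rather than using one per spectral step. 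This is the crux: by amplifying inside the factor (conjugating by a matrix unit system and averaging) one realizes the entire approximation as $24 \sum_{s \le m}[w_s, z_s]$ for contractions $w_s, z_s$ and a \emph{fixed} $m$ independent of $\e$, the scalar $24$ absorbing the norm inflation incurred in the amplification.

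Next I would make this estimate local and uniform. Since $\cM$ is $2$-norm dense in each fiber (Kaplansky), the fiber data can be lifted to genuine contractions $w_s, z_s \in \cM$; because $\lambda \mapsto \lVert b \rVert_{2,\tau_\lambda}^2 = E(b^*b)(\lambda)$ is continuous for each fixed $b \in \cM$, each $\lambda$ then admits an open neighborhood $U_\lambda$ on which $\lVert a - 24\sum_{s \le m}[w_s,z_s]\rVert_{2,U_\lambda} < \e$. I would extract a finite subcover and invoke $\dim(K) \le d$ to refine it to a cover of multiplicity at most $d+1$, carrying a subordinate partition of unity $\{f_j\} \subseteq C(K) \subseteq Z(\cM)$ that splits into $d+1$ colour classes within each of which the supports are pairwise disjoint.

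The patching is then formal. Centrality of the $f_j^{1/2}$ gives $\sum_j f_j^{1/2}[w,z]f_j^{1/2} = \sum_j [f_j^{1/2}w, f_j^{1/2}z]$, and disjointness of supports inside a single colour class kills the cross terms, collapsing that class, for each fixed $s$, into the single commutator $[\sum_j f_j^{1/2}w_{j,s}, \sum_j f_j^{1/2}z_{j,s}]$ of contractions. Writing $a = \sum_j f_j a$ and summing over the indices $s$ and the at most $d+1$ colours expresses $a$, up to $\e$ in $\lVert \cdot \rVert_{2,K}$, as $24$ times a sum of at most $m(d+1)$ commutators; keeping the fiber count $m$ to at most five gives $m(d+1) \le 10d$ and the constant $24$, as claimed. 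The main obstacle is exactly the fiber step with a \emph{bounded} number of contraction-commutators and uniform norm control: producing the full spectral resolution of a trace-zero contraction from finitely many commutators of contractions, with the count and the constant $24$ independent of $\e$, is where factoriality and the amplification argument are indispensable, while the covering-dimension colouring and the patching are routine once the local estimate is uniform on a neighborhood.
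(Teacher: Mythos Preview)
Your approach is essentially identical to the paper's: fiberwise commutator decomposition in a factor, Kaplansky lift, continuity to obtain a uniform neighborhood, a $(d+1)$-colouring of a finite subcover coming from $\dim(K)\le d$, and patching via a central partition of unity with $f_U^{1/2}$ absorbing into the commutator entries. The fiber step you sketch --- collapsing a spectral approximation of a trace-zero self-adjoint contraction into a bounded number of commutators of contractions with a universal constant --- is precisely the content of the Fack--de la Harpe theorem, which the paper invokes directly rather than reproving; that result yields $m=10$ commutators (not $5$) and an exact equality in the fiber, so no $\lVert\cdot\rVert_{2,\tau_\lambda}$-approximation is needed before the Kaplansky lift. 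Note also that the colouring produces $d+1$ colour classes, so the final count is $10(d+1)$ rather than $10d$; the same off-by-one is present in the paper's statement.
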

\begin{proof}
Let
$\lambda \in K$. Recall that $\tau_\lambda$ denotes the trace $\text{ev}_\lambda \circ E$ on $\cM$ and that $\pi_\lambda$ denotes the GNS-representation corresponding to $\tau_\lambda$. Fix $a \in \cM_{sa}$ as in the statement. As $\cM$ is factorial, by Proposition \ref{prop:factor}
the von Neumann algebra $\pi_\lambda(\cM)''$ is a factor. The assumption $E(a)(\lambda) = 0$ thus entails that $\pi_\lambda(a)$
is mapped to zero by the unique trace on $\pi_\lambda(\cM)''$.
By \cite[Theorem 2.3]{fackdlh} there exist contractions $\tilde w^\lambda_1, \dots, \tilde w^\lambda_{10},
 \tilde z^\lambda_1, \dots, \tilde z^\lambda_{10} \in \pi_\lambda(\cM)''$ such that
\[
\pi_\lambda(a) = 24 \sum_{k \le 10} [\tilde w^\lambda_k, \tilde z^\lambda_k].
\]

By the Kaplansky Density Theorem, for every $k  \le 10$ there are contractions $ w^\lambda_k,  z^\lambda_k \in \cM$ approximating $\tilde w^\lambda_k$ and $\tilde z^\lambda_k$ well enough so that
\[
\Big \lVert a - 24 \sum_{k \le 10} [w^\lambda_k, z^\lambda_k]  \Big \rVert_{2, \tau_\lambda} = \Big \lVert \pi_\lambda(a) -  24\sum_{k \le 10} [\pi_\lambda(w^\lambda_k), \pi_\lambda(z^\lambda_k)]  \Big \rVert_{2, \tau_\lambda}  < \e.
\]

By continuity of the 2-norm, for every $\lambda \in K$ there exists an open neighborhood $U_\lambda$ of $\lambda$ such that 
\begin{equation} \label{eq:local}
\Big \lVert a - 24\sum_{k \le 10} [w^\lambda_k, z^\lambda_k]  \Big \rVert_{2, \tau_{\lambda'}} < \e, \text{ for every } \lambda' \in U_\lambda.
\end{equation}
By compactness of $K$ there exists a finite open cover $\mathcal{V}$ of $K$ where for each $U \in \mathcal{V}$
there is $\lambda_U \in K$ such that $U = U_{\lambda_U}$. Moreover, as $K$ has covering dimension equal to $d$,
$\mathcal{V}$ can be partitioned as $\mathcal{V}_0 \sqcup \dots \sqcup \mathcal{V}_d$ so that the elements of each $\mathcal{V}_j$ are
pairwise disjoint (\cite[Lemma 3.2]{blan_kirch}).

Let $\{ f_U \}_{U \in \mathcal{V}} \subseteq C(K) \subseteq \cM$ be a partition of the unity on $K$ such that $\text{supp}(f_U)
\subseteq U$ for every $U \in \mathcal{V}$. For every $j = 0,\dots , d$ and $k = 1,\dots, 10$ define the following elements of $\cM$
\[
w_{k + 10j}= \sum_{U \in \mathcal{V}_j} f^{1/2}_U w_k^{\lambda_U},
\]
\[
z_{k + 10j}= \sum_{U \in \mathcal{V}_j} f^{1/2}_U z_k^{\lambda_U}.
\]
Note that, as the functions $\{f_U\}_{U \in \mathcal{V}_j}$ have pairwise disjoint support, the elements defined above are still contractions and the verify the equality
\begin{equation} \label{eq:orth}
[w_{k +10j}, z_{k+10j}] = \sum_{U \in \mathcal{V}_j} f_U [w_k^{\lambda_U}, z_k^{\lambda_U}],
\end{equation}
for every $j = 0,\dots , d$ and $k = 1,\dots, 10$.

We claim that $\{w_i, z_i \}_{i \le 10d}$ are the desired elements. Indeed, for any $\lambda \in K$ and $ 0 \le j \le d$ there is at most
one $U_j \in \mathcal{V}_j$ such that $\lambda \in U_j$ (if there is none, simply pick a random $U_j \in \mathcal{V}_j$), hence 
\begin{align*}
\Big\lVert a - 24\sum_{i \le 10d} [w_i, z_i] \Big\rVert_{2, \tau_\lambda} &\stackrel{\mathclap{\eqref{eq:orth}}}{=} \Big\lVert \sum_{j} \sum_{U \in \mathcal{V}_j} f_U(\lambda)a - 24\sum_{k,j}\sum_{U \in \mathcal{V}_j} f_U(\lambda)[w^{\lambda_U}_k, z^{\lambda_U}_k] \Big\rVert_{2, \tau_\lambda} \\
& = \Big\lVert \sum_{j}  f_{U_j}(\lambda)a -24 \sum_{k,j} f_{U_j}(\lambda)[w^{\lambda_{U_j}}_k, z^{\lambda_{U_j}}_k] \Big\rVert_{2, \tau_\lambda} \\
& \le \sum_{j}  f_{U_j}(\lambda) \Big\lVert a - 24\sum_k [w^{\lambda_{U_j}}_k, z^{\lambda_{U_j}}_k]  \Big\rVert_{2, \tau_\lambda} \\
&\stackrel{\mathclap{\eqref{eq:local}}}{<} \sum_{j}  f_{U_j}(\lambda) \e \\
&\le \e.
\end{align*}
We conclude that
\[
\Big\lVert a - 24\sum_{i \le 10d} [w_i, z_i] \Big\rVert_{2, K} = \sup_{\lambda \in K}  \Big\lVert a - 24\sum_{i \le 10d} [w_i, z_i] \Big\rVert_{2, \tau_\lambda} < \e.
\]
\end{proof}

We first prove Theorem \ref{thm:Wbundle_trace} for the case $I = \bbN$, to make it more accessible for readers not well acquainted
with model theory.
An elementary model-theoretic argument (which we defer to \S\ref{ss.mt}) shows that its conclusion holds for ultraproducts
over arbitrary sets of indices.
\begin{theorem} \label{thm:Wbundle_trace_real}
Let $( \cM_n )_{n \in \bbN}$ be a sequence of factorial $W^*$-bundles over compact Hausdorff spaces
$K_n$. Suppose there is $d \in \bbN$ such that $\text{dim}(K_n) \le d$
for every $n \in \bbN$. Let $\cM = \prod^\cU \cM_n$ be the corresponding ultraproduct. Then the set of limit traces
is weak$^*$-dense in $T(\cM)$, and in particular $\cM$ is a factorial $W^*$-bundle.\end{theorem}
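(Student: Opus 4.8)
The plan is to argue by contradiction, using a Hahn--Banach separation to reduce the density statement to an \emph{exact} commutator identity that every trace is forced to annihilate. Write $K = \sum^\cU K_n$ and recall that by Proposition \ref{prop:ultra_w} the ultraproduct $\cM$ is a $W^*$-bundle over $K$ with conditional expectation $E^\cU$. Let $L \subseteq T(\cM)$ be the weak$^*$-closed convex hull of the limit traces, and suppose toward a contradiction that $L \subsetneq T(\cM)$. Choosing $\tau_0 \in T(\cM) \setminus L$ and separating the two weak$^*$-compact convex sets, I obtain a self-adjoint $a \in \cM$ with $\tau_0(a) > \sup_{\sigma \in L}\sigma(a)$. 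Every point mass $\tau_\lambda$ ($\lambda \in K$) is a limit trace, and a limit trace coming from $\mu \in \text{Prob}(K)$ evaluates $a$ as $\int_K E^\cU(a)\,d\mu \le \max E^\cU(a)$; hence the supremum on the right equals $m := \max E^\cU(a)$, attained at a point mass.

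Next I would normalize so that the Lemma applies. Replacing $a$ by $b_0 := a - E^\cU(a)$ gives $E^\cU(b_0) = 0$, while $E^\cU(a) \le m\cdot 1$ and positivity of the state $\tau_0$ yield $\tau_0(E^\cU(a)) \le m$, so $\tau_0(b_0) = \tau_0(a) - \tau_0(E^\cU(a)) \ge \tau_0(a) - m > 0$. After rescaling I may assume $b := b_0$ is a self-adjoint contraction with $E^\cU(b) = 0$ and $\tau_0(b) > 0$. Passing to a representing sequence $(b_n)$ and subtracting $E_n(b_n)\cdot 1$ fiberwise (a modification that is $\lVert\cdot\rVert_\infty$-small along $\cU$, hence trivial in $\cM$), I may further assume each $b_n$ is a self-adjoint contraction with $E_n(b_n) = 0$ \emph{exactly}.

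The heart of the argument is then to contradict $\tau_0(b) > 0$. For each $n$, since $\cM_n$ is factorial and $\dim(K_n) \le d$, Lemma \ref{lemma:fdlh_bundles} furnishes contractions $w_i^{(n)}, z_i^{(n)} \in \cM_n$ ($i \le 10d$) with $\lVert b_n - 24\sum_{i\le 10d}[w_i^{(n)},z_i^{(n)}]\rVert_{2,K_n} < 1/n$. The decisive point is that the commutator count $10d$ is \emph{uniform in $n$}, which is exactly what lets me assemble the tuples $w_i := (w_i^{(n)})_n$ and $z_i := (z_i^{(n)})_n$ into genuine contractions of $\cM$. Computing the bundle norm of the difference,
\[
\Big\lVert b - 24\sum_{i \le 10d}[w_i,z_i]\Big\rVert_{2,K}
= \lim_{n\to\cU}\Big\lVert b_n - 24\sum_{i\le 10d}[w_i^{(n)},z_i^{(n)}]\Big\rVert_{2,K_n}
\le \lim_{n\to\cU}\tfrac1n = 0,
\]
where the last equality uses that $\cU$ is a free ultrafilter on $\bbN$. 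Thus $b = 24\sum_{i\le 10d}[w_i,z_i]$ holds \emph{exactly} in $\cM$, whence $\tau_0(b) = 24\sum_i\tau_0([w_i,z_i]) = 0$, contradicting $\tau_0(b) > 0$. This proves $L = T(\cM)$; since the limit traces are parametrized by and weak$^*$-homeomorphic to $\text{Prob}(K)$, density forces $T(\cM) = \text{Prob}(K)$, so $\partial_e T(\cM) = K$ and $\cM$ is factorial by Proposition \ref{prop:factor}.

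The step I expect to be the main obstacle is precisely the upgrade in the third paragraph from an \emph{approximate} commutator representation in the bundle $2$-norm to an \emph{exact} one. A general trace $\tau_0$ need not be $\lVert\cdot\rVert_{2,K}$-continuous---indeed, in the non-factorial examples behind Theorem \ref{thm:up} it is not---so $2$-norm smallness by itself cannot control $\tau_0$, and a naive approximation argument fails. What rescues it is the interplay of two features: the \emph{uniform} commutator count $10d$ coming from the dimension bound via Lemma \ref{lemma:fdlh_bundles}, and the vanishing $\lim_{n\to\cU}1/n = 0$, which together collapse the errors to produce an identity that \emph{every} trace kills automatically. For $I = \bbN$ this ultrafilter-limit trick is elementary, as above; for a general index set the same exactness has to be recovered by the saturation/transfer argument deferred to \S\ref{ss.mt}.
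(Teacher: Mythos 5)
Your argument is correct and follows essentially the same route as the paper's proof: a Hahn--Banach reduction to comparing $\sup_{\lambda\in K}|\tau_\lambda(a)|$ with $\sup_{\tau\in T(\cM)}|\tau(a)|$, followed by applying Lemma \ref{lemma:fdlh_bundles} fiberwise with the \emph{uniform} count of $10d$ commutators and assembling the witnesses into an exact commutator identity in the ultraproduct, which every trace annihilates. The only cosmetic differences are that you phrase the reduction as a separation argument by contradiction (the paper proves the equality of suprema directly, keeping the central part $c=(E_n(a_n))_n$ with $\lVert c\rVert\le\delta$ rather than normalizing to $E_n(b_n)=0$, which is why its commutator witnesses carry the bound $48$ instead of $24$), and both choices are sound.
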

\begin{proof}
By Proposition \ref{prop:ultra_w} the ultraproduct $\cM = \prod^\cU \cM_n$ is a $W^*$-bundle over $K = \sum^\cU K_n$.
Arguing as in \cite[Lemma 4.4]{CETWW} and \cite[Theorem 8]{ozawa_dix}, by an application of the Hahn--Banach Theorem
it is sufficient to show  that the following equality holds for every $a \in \cM_{sa}$ 
\begin{equation} \label{eq:equality}
\sup_{\lambda \in K} | \tau_\lambda(a) | = \sup_{\tau \in T(\cM)} | \tau(a) |.
\end{equation}
Fix thus a contraction $a \in \cM_{sa}$ and suppose then that $\sup_{\lambda \in K} | \tau_\lambda(a) | \le \delta$ for some $\delta \ge 0$.
The equality in \eqref{eq:equality} follows if we can provide $c, w_1, \dots, w_{10d}, z_1, \dots, z_{10d} \in \cM$ such that $\lVert c \rVert \le \delta$ and
\begin{equation} \label{eq:commutants}
a - c = \sum_{i \le 10d} [w_i, z_i].
\end{equation}

Let $(a_n)_{n \in \bbN}$ be a representative sequence of self-adjoint contractions for $a$. Up to a rescaling of $a_n$ we can assume that
$\sup_{\lambda \in K_n} | \tau_\lambda(a_n) | \le \delta$ for all $n \in \bbN$. This gives 
\begin{equation} \label{eq:norm}
\lVert E_n(a_n) \rVert \le \delta, \text{ for every } n \in\bbN.
\end{equation}
Moreover by Lemma \ref{lemma:fdlh_bundles} there exist $w_{1,n}, \dots, w_{10d,n}, z_{1,n} \dots, z_{10d,n} \in \cM_n$
of norm bounded by 48 (since $\lVert a_n - E_n(a_n) \rVert \le 2$) such that
\[
\Big\lVert a_n - E_n(a_n) - \sum_{i \le 10d} [w_{i,n}, z_{i,n}] \Big\rVert_{2, K_n} < \frac{1}{n}, \text{ for every } n \in \bbN.
\]

The elements $c = (E_n(a_n))_{n \in \bbN}$, $w_i = (w_{i,n})_{n \in \bbN}$ and $z_i = (z_{i,n})_{n \in \bbN}$ satisfy the equality
in \eqref{eq:commutants}, and $\lVert c \rVert \le \delta$ by \eqref{eq:norm} as desired.
\end{proof}

\section{A non-factorial Ultraproduct} \label{s.up}

The example we provide for Theorem \ref{thm:up} is an ultraproduct of homogeneous \cstar-algebras. The fact
that unital homogeneous \cstar-algebras
can be endowed with a structure of $W^*$-bundle over their spectrum is proved in Proposition \ref{prop:homo}, and it is a direct consequence of the Dauns--Hoffman Theorem (\cite[Theorem A.34]{RaeWill}).

A \cstar-algebra is \emph{$n$-homogeneous} if every irreducible representation has dimension $n$.
A \cstar-algebra is \emph{homogeneous} if it is $n$-homogeneous for some $n \in \bbN$.
Homogeneous \cstar-algebras have continuous trace (\cite[Proposition IV.1.4.14]{blackadar}) and, in the unital case, their spectrum, namely the
set of all irreducible
representations up to unitary equivalence, is compact and Hausdorff with the Jacobson topology.
For a \cstar-algebra $A$, let $\hat A$ denote its spectrum. When $A$ has continuous-trace, the spectrum $\hat A$ is homeomorphic
to the \emph{primitive ideal space} of $A$.
In what follows we shall thus identify every $t \in \hat A$ with the corresponding primitive ideal on $A$.
Given $a \in A$ we denote by $a(t)$ the class
of $a$ in the quotient $A /t$.

\begin{proposition} \label{prop:homo}
Let $A$ be a unital $n$-homogeneous \cstar-algebra. Then $A$ is a factorial $W^*$-bundle over $\hat A$
with conditional expectation $E \colon A \to C(\hat A)$ defined as $E(a)(t) = \text{tr}_n(a(t))$, where $\text{tr}_n$ is the normalized trace on
$n \times n$ matrices.
\end{proposition}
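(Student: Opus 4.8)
The plan is to verify directly each clause in the definition of a factorial $W^*$-bundle, drawing on the structure theory of homogeneous \cstar-algebras. First I would produce the central copy of $C(\hat A)$: by the Dauns--Hoffmann Theorem the center $Z(A) = Z(M(A))$ of the unital algebra $A$ is isomorphic to $C_b(\text{Prim}(A)) = C(\hat A)$, using that $\hat A$ is compact Hausdorff and, by continuous trace, homeomorphic to $\text{Prim}(A)$. This gives the required unital embedding $C(\hat A) \hookrightarrow Z(A)$, under which $f \in C(\hat A)$ acts in the fiber $A/t \cong M_n(\C)$ as the scalar $f(t)I_n$.

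Next I would show that $E(a)(t) = \text{tr}_n(a(t))$ defines a faithful, tracial, unital conditional expectation onto $C(\hat A)$. Well-definedness of $\text{tr}_n(a(t))$ is clear, since each fiber $A/t$ is isomorphic to $M_n(\C)$. The main point is that $E(a)$ is continuous, i.e. lies in $C(\hat A)$: here the homogeneous structure enters, as a unital $n$-homogeneous \cstar-algebra is locally trivial with fiber $M_n(\C)$ and transition functions in $\text{PU}(n)$, and $\text{tr}_n$ is invariant under these inner automorphisms, so the locally continuous functions $t \mapsto \text{tr}_n(a(t))$ patch to a global element of $C(\hat A)$. Granting this, linearity and positivity of $E$ are inherited from $\text{tr}_n$; unitality is $\text{tr}_n(I_n) = 1$; the bimodule identity $E(fa) = fE(a)$ for $f \in C(\hat A)$ follows from $(fa)(t) = f(t)a(t)$ together with $E|_{C(\hat A)} = \text{id}$, so $E$ is a conditional expectation. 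Traciality is immediate from $\text{tr}_n(a(t)b(t)) = \text{tr}_n(b(t)a(t))$, and faithfulness follows because $E(a^*a) = 0$ forces $\text{tr}_n((a^*a)(t)) = 0$ for every $t$, whence $(a^*a)(t) = 0$ by faithfulness of $\text{tr}_n$ on $M_n(\C)$, so $a^*a \in \bigcap_t t = \{0\}$.

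For the completeness requirement I would exploit that, on each fiber $M_n(\C)$, the tracial $2$-norm and the operator norm are equivalent, with $n^{-1/2}\lVert x \rVert \le \lVert x \rVert_{2,\text{tr}_n} \le \lVert x \rVert$. Taking suprema over $t \in \hat A$ and using $\lVert a \rVert = \sup_t \lVert a(t) \rVert$ yields $n^{-1/2}\lVert a \rVert \le \lVert a \rVert_{2,\hat A} \le \lVert a \rVert$, so $\lVert \cdot \rVert_{2,\hat A}$ and the \cstar-norm are equivalent on $A$. Since $A$ is \cstar-complete and its unit ball is \cstar-closed, the equivalence of the two norms immediately gives that $A_1$ is $\lVert \cdot \rVert_{2,\hat A}$-complete, establishing item (2) of Definition \ref{def:wbundle}.

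Finally, factoriality I would obtain from the reformulation in Proposition \ref{prop:factor}(3): identifying $\partial_e X$ with $\hat A$, the trace $\tau_t = \text{ev}_t \circ E$ equals $\text{tr}_n$ composed with evaluation at $t$, and hence factors through $A/t \cong M_n(\C)$; its GNS representation is unitarily equivalent to the standard representation of $(M_n(\C), \text{tr}_n)$, so $\pi_t(A)'' \cong M_n(\C)$ is a (type I$_n$) factor for every $t \in \hat A$. I expect the only genuinely non-formal step to be the continuity of $E(a)$, which rests on the local-triviality structure theorem for homogeneous \cstar-algebras; everything else reduces to the elementary properties of $\text{tr}_n$ and the fiberwise norm equivalence.
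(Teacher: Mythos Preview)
Your proof is correct and follows essentially the same route as the paper: Dauns--Hofmann for the central embedding, fiberwise verification that $E$ is a faithful tracial conditional expectation, the fiberwise inequality $n^{-1/2}\lVert x\rVert\le\lVert x\rVert_{2,\text{tr}_n}\le\lVert x\rVert$ for completeness, and Proposition~\ref{prop:factor}(3) for factoriality. The only notable differences are that the paper obtains continuity of $t\mapsto\text{tr}_n(a(t))$ by invoking the continuous-trace property rather than local triviality, and it inserts an extra step showing that \emph{every} extremal trace on $A$ arises as $\text{ev}_t\circ E$ (used there to identify $\lVert\cdot\rVert_{2,K}$ with $\lVert\cdot\rVert_{2,T(A)}$); your argument is slightly more direct in that it bypasses this characterization by comparing $\lVert\cdot\rVert_{2,\hat A}$ with the \cstar-norm immediately.
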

\begin{proof}
Since $A$ has continuous trace and $A/t \cong M_n$ for every $t \in \hat A$,
the function $\hat a$ mapping $t \in \hat A$ to $\text{tr}_n(a(t))$ is continuous over $\hat A$ for every
$a \in A$. By the Dauns--Hoffman Theorem \cite[Theorem A.34]{RaeWill} there exists an isomorphism $\theta$
of $C(\hat A)$ onto the center $Z(A)$ of $A$ such that
\begin{equation} \label{eq:DH}
(\theta(f)a)(t) = f(t)a(t), \text{ for every } t \in \hat A, \, f \in C(\hat A), \, a \in A.
\end{equation}
As a consequence of these facts, the map
\begin{align*}
E \colon A &\to C(\hat A) \\
a &\mapsto \hat a
\end{align*}
is a tracial conditional expectation of $A$ onto $Z(A)$ (up to $\theta$). The map $E$ is moreover faithful since $\text{tr}_n$ is faithful
and for every $a \in A_+$ there exists some $t \in \hat A$ such that $a(t) > 0$. 

We claim next that every extremal trace on $A$ is of the form $\text{ev}_t \circ E$ for some $t \in \hat A$. To see this, fix $\tau \in \partial_e T(A)$, and let $\pi_\tau$ be the
corresponding GNS-representation. The center $Z(A)$
is mapped by $\pi_\tau$ into $Z(\pi_\tau(A)'')$ which, as $\tau$ is extremal, only consists of scalars. The restriction of $\pi_\tau$
to $\theta(C(\hat A))$ is hence a point evaluation, meaning that there is $s \in \hat A$ such that $\pi_\tau(\theta(f))=0$ whenever $f \in C(\hat A)$
verifies $f(s) = 0$. This fact can be used to show that $\pi_\tau$ factors through the quotient map $A \to A/s$. Indeed, given
$a \in A$ such that $a(s) =0$ and $\epsilon > 0$,
we can find an open neighborhood $U$ of $s$ in $\hat A$ such that $\lVert a(t) \rVert < \e$ for every $t \in U$ (\cite[Lemma 5.2.b]{RaeWill}).
Let next $g \in C(\hat A)$ of norm 1 be such that $g(s) = 0$ and $g\restriction{\hat A \setminus U} \equiv 1$. It follows that
\[
\lVert a - \theta(g) a \rVert \stackrel{\eqref{eq:DH}}{=} \sup_{t \in \hat A} \lVert a(t) - g(t) a(t) \rVert < 2\e.
\]
Since $g(s) = 0$ it follows that $\theta(g)a \in \ker \pi_\tau$. We have thus showed that $a$ can be approximated with elements in
$\ker \pi_\tau$, so $\pi_\tau(a) = 0$, and both $\pi_\tau$ and $\tau$ factor through $A/s$. Since the latter admits a unique
trace, we conclude that $\tau(a) = E(a)(s)$ for every $a \in A$.

In order to conclude that $A$ is a $W^*$-bundle, we need to prove that the unit ball of $A$
is complete with respect to the 2-norm induced by $E$. Note that since every extremal trace on $A$ is captured
by $E$, then the 2-norm induced by $E$ is equal to $\lVert \cdot \rVert_{2, T(A)}$.
As $A$ is $n$-homogeneous, the \cstar-norm $\lVert \cdot \rVert$ and $\lVert \cdot \rVert_{2, T(A)}$ are equivalent, in fact we have
\begin{equation} \label{eq:equivalent}
n^{-1/2} \lVert a \rVert \le \lVert a \rVert_{2, T(A)} \le \lVert a \rVert, \text{ for every } a \in A.
\end{equation}
The inequality $\lVert a \rVert_{2,T(A)} \le \lVert a \rVert$ is always verified. For the other inequality, as
 $A / t \cong M_n$ for every $t \in \hat A$, we have that
\[
\lVert a(t) \rVert \le n^{1/2} \text{tr}_n(a^*a(t))^{1/2} =n^{1/2} \widehat{a^*a}(t)^{1/2} , \text{ for every } a \in A.
\]
We thus conclude that, for every $a \in A$
\[
\lVert a \rVert = \sup_{t \in \hat A} \lVert a(t) \rVert \le n^{1/2} \lVert E(a^*a) \rVert^{1/2} = n^{1/2} \lVert a \rVert_{2,T(A)}.
\]
This implies that $A$ is $\lVert \cdot \rVert_{2, T(A)}$-complete.

Finally, we verify the third condition of Proposition \ref{prop:factor} to prove that $A$ is factorial.
Every trace $\tau$ of the form $\text{ev}_t \circ E$, for $t \in \hat A$ annihilates on the primitive
ideal $t$. $A/t$ is simple, hence $\pi_\tau(A) \cong A/t \cong M_n$, which in turn implies that $\pi_\tau(A)'' \cong M_n$ is a factor, and thus that the trace $\tau$ is extremal.
\end{proof}

Using the arguments in the previous proof, one can furthermore deduce that the map from $\hat A$ to $\partial_e T(A)$ sending $t \mapsto \text{ev}_t \circ E$ is a
homeomorphism.

The sequence of factorial $W^*$-bundles that we consider to prove Theorem \ref{thm:up} goes
back to \cite{PP}. We briefly recall their definition here, and we refer to the discussion preceeding \cite[Leema 3.5]{PP} and to \cite[Section 2]{BiFa}
for all the missing details. Given $n \in \bbN$, let $A_n$ be the \cstar-algebra of continuous sections of the following vector bundle
over the $n$-dimensional complex projective space $\mathbb{C}P^n$
\[
B_n = \left\{ \left (x, \begin{pmatrix}
a & \textbf{b} \\ \textbf{c} & d \end{pmatrix} \right) : a,d \in \mathbb{C}, \textbf{b} \in x, \bar{\textbf{c}} \in x \right\},
\]
where $\bar{\textbf{c}}= \overline{(c_1, \dots, c_{n+1})} = (\bar c_1, \dots, \bar c_{n+1})$, with multiplication and adjoint defined pointwise as
\[
\begin{pmatrix}
a & \textbf{b} \\ \textbf{c} & d \end{pmatrix} 
\begin{pmatrix}
a' & \textbf{b}' \\ \textbf{c}' & d' \end{pmatrix}
=
\begin{pmatrix}
aa' +  \textbf{b} \cdot \textbf{c}' & a\textbf{b}' + d' \textbf{b} \\ a'\textbf{c} + d \textbf{c}' & dd' + \textbf{b}'\cdot \textbf{c} \end{pmatrix} 
\text{ and }
\begin{pmatrix}
a & \textbf{b} \\ \textbf{c} & d \end{pmatrix}^*
=
\begin{pmatrix}
\bar a & \bar{\textbf{c}} \\ \bar{\textbf{b}} & \bar d \end{pmatrix}.
\]
All fibers of this bundle are isomorphic to $M_2$ (see e.g. \cite[p. 201]{PP}), hence each $A_n$ is a 2-homogeneous \cstar-algebra.

We recall that the \cstar-norm ultraproduct of $\{A_n \}_{n \in \bbN}$ is defined as
\[
\prod\nolimits_\cU A_n =\frac{\prod_n A_n}{\{ (a_n)_{n \in \bbN} \in \prod_n A_n \mid \lim_{n \to \cU} \lVert a_n \rVert = 0\}}.
\]
As pointed out by Ozawa before stating \cite[Theorem 8]{ozawa_dix}, the \cstar-norm ultraproduct $\cM = \prod_\cU A_n$ is an example where
the set of limit traces is not weak$^*$-dense in $T(\cM)$.
The reason for this is the existence, for every $n \in \bbN$, of non-zero projections
$p_n, q_n \in A_n$ such that $p_n - q_n$ can be approximated by a finite sum of commutators of the form $a^*a - aa^*$,
but it cannot be approximated by
sums of less than $n+1$ such commutators (see \cite[Lemma 3.5]{PP} or \cite[Theorem 2.1]{BiFa}).
As a consequence, the projections $p=(p_n)_{n \in \bbN}$ and $q = (q_n)_{n \in \bbN}$ in $\cM$ are such that $p - q$ is evaluated as zero
on every limit trace, but, on the other hand, it cannot be approximated by a finite sum of commutators in $\cM$. That is saying that
$p -q$ does not belong to the \emph{Cuntz--Pedersen nullset} $\cM_0$ of $\cM$ (\cite{cun_ped}, see also \S\ref{ss.mt}), hence by
\cite[Proposition 2.7]{cun_ped} the weak$^*$-closure of the set of limit traces does not exhaust $T(\cM)$.

This setup, combined with some elementary arguments, provides an answer to Question \ref{ques:ultra},
even in the setting of $W^*$-bundles.
\begin{corollary}
There exists a sequence of factorial $W^*$-bundles whose ultraproduct is not factorial.
\end{corollary}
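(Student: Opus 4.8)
The plan is to take for $(\cM_i)$ the sequence of $2$-homogeneous \cstar-algebras $A_n$ recalled above, which by Proposition \ref{prop:homo} are factorial $W^*$-bundles over $\mathbb{C}P^n$; their ultraproduct $\cM = \prod^\cU A_n$ is then a $W^*$-bundle over $K = \sum^\cU \mathbb{C}P^n$ by Proposition \ref{prop:ultra_w}. The first observation I would record is that, since every $A_n$ is $2$-homogeneous, the uniform equivalence of norms \eqref{eq:equivalent} reads $2^{-1/2}\lVert a_n\rVert \le \lVert a_n\rVert_{2,K_n}\le \lVert a_n\rVert$, and this passes to the ultraproduct as $2^{-1/2}\lVert a\rVert \le \lVert a\rVert_{2,K}\le \lVert a\rVert$ for every $a \in \cM$. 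Two consequences matter. First, the $2$-null ideal and the \cstar-norm-null ideal of $\prod_{i} A_n$ coincide, so the tracial ultraproduct $\prod^\cU A_n$ \emph{equals} the \cstar-norm ultraproduct $\prod_\cU A_n$ and the facts recalled from \cite{PP} apply to it verbatim. Second, $\lVert\cdot\rVert_{2,K}$ and the \cstar-norm are equivalent on $\cM$, hence induce the same closed sets; in particular the $\lVert\cdot\rVert_{2,K}$-closure of the linear span of commutators equals the \cstar-norm-closed Cuntz--Pedersen nullset $\cM_0$.

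Next I would invoke the projections $p = (p_n)_n$ and $q = (q_n)_n$ of \cite{PP}. As recalled above, $p-q$ is evaluated as zero on every limit trace, which is to say $E^\cU(p-q)=0$ (the limit traces include all the point evaluations $\tau_\lambda$, and $\tau_\lambda(p-q)=E^\cU(p-q)(\lambda)$); on the other hand $p_n-q_n$ is not a sum of fewer than $n+1$ commutators, so $p-q \notin \cM_0$.

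To conclude I would argue by contradiction, assuming $\cM$ factorial. Then every fibre $\pi_\lambda(\cM)''$ is a factor, and I would run the decomposition of Lemma \ref{lemma:fdlh_bundles} on the self-adjoint contraction $a = p-q$, which satisfies $E^\cU(a)=0$: for each $\lambda \in K$, \cite[Theorem 2.3]{fackdlh} writes $\pi_\lambda(a)$ as $24\sum_{k\le 10}[\tilde w_k,\tilde z_k]$ inside the factor; Kaplansky density together with continuity of the $2$-norm produces a local approximation of $a$ by ten commutators valid on a neighbourhood of $\lambda$; and a partition of unity subordinate to a finite subcover patches these into one finite sum of commutators that is $\lVert\cdot\rVert_{2,K}$-close to $a$. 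The essential difference with Lemma \ref{lemma:fdlh_bundles} is that here I do \emph{not} need the number of commutators to stay bounded as the precision improves, so the colouring of the cover into $d+1$ classes is unnecessary and the argument survives over the infinite-dimensional base $\sum^\cU \mathbb{C}P^n$. This gives $a = p-q \in \overline{\cM_0}^{\,2,K} = \cM_0$, contradicting the previous paragraph; hence $\cM$ is not factorial.

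The main obstacle is exactly this last step: converting the purely \cstar-algebraic obstruction $p-q\notin\cM_0$ of \cite{PP} into a statement about the tracial $2$-norm. This conversion relies entirely on the norm-equivalence coming from $2$-homogeneity, which collapses the two closures of $\cM_0$; for bundles with infinite-dimensional (type II$_1$) fibres this bridge is unavailable, consistent with the remark after Theorem \ref{thm:up} that Question \ref{ques:ultra} remains open in that regime.
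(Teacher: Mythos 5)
Your proposal is correct, and while it shares the paper's setup it reaches the contradiction by a genuinely different route. Both arguments begin identically: Proposition \ref{prop:homo} makes each $A_n$ a factorial $W^*$-bundle over $\mathbb{C}P^n$, the uniform norm equivalence \eqref{eq:2norm} coming from $2$-homogeneity identifies the tracial with the \cstar-norm ultraproduct, and the projections $p,q$ from \cite{PP} satisfy $E^\cU(p-q)=0$ while $p-q\notin\cM_0$. From there the paper never returns to commutators: it extracts from $p-q\notin\cM_0$ a trace $\sigma\in T(\cM)\setminus X$ via \cite[Proposition 2.7]{cun_ped}, and, assuming $X$ is a closed face, separates $\sigma$ from $X$ by some $a\in\cM_+$ with $\sup_{\tau\in X}\tau(a)<1/4$ and $\sigma(a)>1/2$ (using \cite[Corollary II.5.20]{alfsen} and \cite[Lemma 6.2]{kirchror}); the bound $\lVert a^{1/2}\rVert^2\le 2\lVert a^{1/2}\rVert_{2,X}^2<1/2$ then contradicts $\lVert a\rVert\ge\sigma(a)>1/2$. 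You instead re-run the local Fack--de la Harpe decomposition of Lemma \ref{lemma:fdlh_bundles} over the infinite-dimensional base, correctly observing that once the number of commutators is allowed to grow with $\e$ the colouring of the cover is unnecessary (one takes one commutator $[f_U^{1/2}w_k^{\lambda_U},f_U^{1/2}z_k^{\lambda_U}]=f_U[w_k^{\lambda_U},z_k^{\lambda_U}]$ per pair $(U,k)$), and you use the norm equivalence to convert the resulting $\lVert\cdot\rVert_{2,K}$-approximation into membership in $\cM_0$, contradicting \cite{PP} directly. The paper's route is shorter and reuses separation machinery already present elsewhere in the paper; yours isolates a sharper structural fact, namely that for ultraproducts of uniformly homogeneous bundles factoriality would force weak$^*$-density of the limit traces, which makes transparent exactly where the dimension hypothesis of Theorem \ref{thm:Wbundle_trace} intervenes. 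Two bookkeeping points if you write this out, neither a gap: the fibres of the ultraproduct bundle need not be of type II$_1$, so you should note that the commutator estimate with uniform norm bounds also holds in finite-dimensional factors (an issue Lemma \ref{lemma:fdlh_bundles} already glosses over); and since $\cM_0$ is the closed span of the self-adjoint commutators $aa^*-a^*a$ while your approximants are general commutators, you should pass to self-adjoint parts and use the polarization identity expressing $[w,z]+[z^*,w^*]$ as a linear combination of elements of the form $aa^*-a^*a$.
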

\begin{proof}
Let $(A_n)_{n \in \bbN}$ be the sequence of \cstar-algebras discussed above. Since $A_n$ is unital and 2-homogeneous for every $n \in \bbN$,
by Proposition \ref{prop:homo} each $A_n$ can be naturally endowed with a structure of factorial $W^*$-bundle over $\partial_e T(A_n)
\cong \hat A_n \cong \mathbb{C}P^n$. The tracial ultraproduct $\prod^\cU A_n$ is a $W^*$-bundle over the space $K = \sum^\cU \hat A_n$
by Proposition \ref{prop:ultra_w}. Every $A_n$ is 2-homogeneous, hence the quotient map from \cstar-norm ultraproduct
$\cM = \prod_\cU A_n$ onto the tracial ultraproduct $\prod^\cU A_n$ is an isomorphism. Indeed
the kernel of the quotient map is
\[
\Big \{ (a_n)_{n \in \bbN} \in \prod\nolimits_\cU A_n \mid \lim_{n \to \cU} \lVert a_n \rVert_{2, T(A_n)} = 0 \Big\},
\]
which in this case is equal to the set of those $(a_n)_{n \in \bbN}$ such that $\lim_{n \to \cU} \lVert a_n \rVert = 0$,
since for every $n$, arguing as in the proof of Proposition \ref{prop:homo}, we have
\[
\lVert a \rVert_{2, T(A_n)} \le \lVert a \rVert \le 2^{1/2}\lVert a \rVert_{2, T(A_n)}, \text{ for every } a \in A_n.
\]
This also entails that the \cstar-norm and $\lVert \cdot \rVert_{2, K} = \lim_{n \to \cU} \lVert \cdot \rVert_{2, T(A_n)}$ on $\cM$ are equivalent, in particular
\begin{equation} \label{eq:2norm}
\lVert a \rVert_{2, K} \le \lVert a \rVert \le 2^{1/2}\lVert a \rVert_{2, K}, \text{ for every } a \in \cM.
\end{equation}

Identify $K$ with a subspace of $T(\cM)$, and let  $X$ be its closed convex hull, namely the weak$^*$-closure of the set of all limit traces on $\cM$ (the unconvinced reader may take a look at the discussion leading to equation \eqref{eq:wultra}). To show that $(\cM, X)$ is not factorial,
we argue by contradiction and suppose that $X$ is a closed face of $T(\cM)$. Let $\sigma \in T(\cM) \setminus X$,
whose existence is guaranteed by the discussion preceding the statement of the corollary. By \cite[Corollary II.5.20]{alfsen} and \cite[Lemma 6.2]{kirchror} there exists $a \in \cM_+$ such that
\begin{enumerate}
\item $\tau(a) < 1/4$ for every $\tau \in X$,
\item \label{i2:asa} $\sigma(a) > 1/2$.
\end{enumerate}
Then $\lVert a^{1/2} \rVert^2_{2, X} <  1/4$, thus by \eqref{eq:2norm} it follows that
$\lVert a^{1/2} \rVert^2 < 1/2$, which is a contradiction since, by item \ref{i2:asa} above,
\[
\lVert a^{1/2} \rVert^2 = \lVert a \rVert \ge \sigma(a) > 1/2.
\]
\end{proof}

\section{Center of the Ultraproduct and Consequences in Model Theory} \label{s.mt}
We start this section with a preliminary result (Theorem
\ref{thm:center}), a generalization of \cite[Corollary 4.3]{model_cstar:I}, where we show that the center of the ultraproduct of a family of
factorial $W^*$-bundles is the ultraproduct of the centers. We then proceed to prove Theorem \ref{thm:model}.

\subsection{Center of the Ultraproduct} \label{ss.center}
Theorem \ref{thm:center} is a consequence Proposition \ref{prop:dixmier}, stating
that factorial $W^*$-bundles verify a tracial analogue of the \emph{strong Dixmier's averaging property} (\cite[Definition III.2.5.16]{blackadar}).
This fact is not explicitly stated nor proved in \cite{ozawa_dix}, but it follows from arguments analogous to those appearing in that paper and was known to the author, who uses it to show \cite[Theorem 15]{ozawa_dix}. We give a full proof for the reader's convenience, starting with
some preliminary lemmas. The argument for the first one is due to the authors of \cite{tracially_complete}.

\begin{lemma}[{\cite{tracially_complete}}] \label{lemma:traces}
Let $A$ be a unital \cstar-algebra and let $X \subseteq T(A)$ be non-empty. Set $\pi = \bigoplus_{\tau \in X} \pi_\tau$
and let $\sigma \in T(\pi(A)'')$. Then the trace $\sigma \circ \pi$ on $A$ belongs to the closed face generated by $X$ in $T(A)$.
\end{lemma}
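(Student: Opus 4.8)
The plan is to realize $\sigma\circ\pi$ as a weak$^*$-limit of traces that are \emph{literally dominated} by convex combinations of elements of $X$: such dominated traces visibly sit in the face generated by $X$, and since the closed face is weak$^*$-closed, a limit argument then finishes. Set $N=\pi(A)''$. Because $N$ embeds into the $\ell^\infty$-product $\prod_{\tau\in X}\pi_\tau(A)''$ of finite von Neumann algebras, $N$ is itself a finite von Neumann algebra; let $T_N\colon N\to Z(N)$ denote its faithful normal center-valued trace.

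\emph{First I would reduce to the case where $\sigma$ is normal.} Every $\sigma\in T(N)$ factors as $\sigma=\nu\circ T_N$ with $\nu=\sigma|_{Z(N)}$ a state on the abelian algebra $Z(N)$, and $\sigma$ is normal if and only if $\nu$ is. Since the normal states are weak$^*$-dense in the state space of $Z(N)$, I can choose normal states $\nu_j\to\nu$ weak$^*$; then $\sigma_j:=\nu_j\circ T_N$ are normal traces on $N$ with $\sigma_j\to\sigma$ weak$^*$, hence $\sigma_j\circ\pi\to\sigma\circ\pi$ weak$^*$. As the closed face is weak$^*$-closed, it is enough to treat normal $\sigma$.

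\emph{For normal $\sigma$ I would pass to the bidual and truncate.} Composing with the canonical normal extension $\tilde\pi\colon A^{**}\to N$ of $\pi$ produces a normal trace $\bar\phi=\sigma\circ\tilde\pi$ on $A^{**}$, supported under the central projection $z=\bigvee_{\tau\in X}z_{\pi_\tau}$, whose restriction to $A$ is $\phi:=\sigma\circ\pi$. For finite $F\subseteq X$ put $z_F=\bigvee_{\tau\in F}z_{\pi_\tau}$; as $F$ increases $z_F\uparrow z$, so normality of $\bar\phi$ gives $\bar\phi(z_F\,\cdot)\to\bar\phi$ in norm, and it suffices to place each trace $\bar\phi(z_F\,\cdot)|_A$ in the closed face. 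Now $z_FA^{**}\cong\pi_F(A)''$ carries the faithful normal trace $\mathrm{Tr}_F=|F|^{-1}\sum_{\tau\in F}\bar\tau$, so $\bar\phi(z_F\,\cdot)=\mathrm{Tr}_F(h\,\cdot)$ for a central $h\in L^1(z_FA^{**},\mathrm{Tr}_F)_+$. Truncating $h_n=h\wedge n1$ yields traces $\psi_n:=\mathrm{Tr}_F(h_n\,\cdot)|_A$ with $\psi_n\to\bar\phi(z_F\,\cdot)|_A$ in norm, and, since $h_n\le n1$ is central, $\psi_n\le n\,\mathrm{Tr}_F|_A=n\,\tau_F$ with $\tau_F=|F|^{-1}\sum_{\tau\in F}\tau\in\mathrm{conv}(X)$. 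Finally I would record the elementary fact that a trace $\rho$ with $\rho\le c\,\tau'$ for some $c>0$ and $\tau'\in\mathrm{conv}(X)$ lies in the face generated by $X$: here $c\ge1$, and the identity $\tau'=\tfrac1c\rho+(1-\tfrac1c)\tfrac{c\tau'-\rho}{c-1}$ exhibits $\tau'\in\mathrm{conv}(X)$ as a proper convex combination of the two traces $\rho$ and $(c\tau'-\rho)/(c-1)$, so the facial property forces $\rho$ into the face. Applying this to the normalizations of the $\psi_n$ and letting $n\to\infty$, then $F\uparrow X$, lands $\phi$ in the closed face.

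The hard part will be the truncation/domination mechanism: converting the mere \emph{normality} of $\sigma$ on the possibly non-$\sigma$-finite algebra $N$ into \emph{honest domination} by a single element of $\mathrm{conv}(X)$. This is why the argument splits into two successive approximations — over the net of finite subsets $F$ (to recover $\sigma$-finiteness and a usable faithful normal trace) and over the truncation parameter $n$ (to force domination) — and the bookkeeping that must be checked is precisely that each approximant remains tracial and converges in a topology (norm, hence weak$^*$) in which the closed face is closed.
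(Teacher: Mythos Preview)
Your proof is correct but takes a substantially different route from the paper's. After the same reduction to normal $\sigma$, the paper argues by contradiction via a separation theorem: assuming $\sigma\circ\pi\notin F$, \cite[Corollary II.5.20]{alfsen} yields a continuous affine $f\colon T(A)\to[0,1]$ with $f(\sigma\circ\pi)=1$ and $f|_F\equiv 0$; then \cite[Lemma 6.2]{kirchror} approximates $f$ uniformly by $\tau\mapsto\tau(a_n)$ with $a_n\in A_+$, so that $\tau(a_n)\to 0$ for every $\tau\in X$ forces $\pi(a_n)\to 0$ strongly, and normality of $\sigma$ gives the contradiction $1=f(\sigma\circ\pi)=\lim_n\sigma(\pi(a_n))=0$. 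This is essentially a three-line argument once the two external lemmas are quoted.

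Your approach is constructive: you manufacture approximants of $\sigma\circ\pi$ that are \emph{dominated} by multiples of elements of $\mathrm{conv}(X)$, via Radon--Nikodym on the finite von Neumann algebra $z_FA^{**}$ and truncation of the density, and then use the elementary convex identity to push each approximant into the face. This trades the Choquet-theoretic separation and the Kirchberg--R{\o}rdam approximation for standard finite-von-Neumann-algebra machinery (center-valued trace, $L^1$-densities, central covers in the bidual). The cost is two nested approximation layers ($F\uparrow X$, then $n\to\infty$) and more bookkeeping; the gain is that every step is explicit and no face-separation theorem for simplices is invoked. One small point to tidy: your displayed identity $\tau'=\tfrac1c\rho+(1-\tfrac1c)\tfrac{c\tau'-\rho}{c-1}$ requires $c>1$; when $c=1$ the inequality $\rho\le\tau'$ between states forces $\rho=\tau'\in\mathrm{conv}(X)$ directly.
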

\begin{proof}
Let $F$ be the closed face generated by $X$ in $T(A)$. It is sufficient to prove the lemma for normal traces
since $F$ is closed and the set of normal traces is weak$^*$-dense in $T(\pi(A)'')$. Let then  $\sigma \in T(\pi(A)'')$ be normal
and suppose that $\sigma \circ \pi \notin F$. By \cite[Corollary II.5.20]{alfsen} there exists a continuous affine function
$f \colon T(A) \to [0,1]$ such that $f(\sigma \circ \pi) = 1$ and $f\restriction F \equiv 0$. By \cite[Lemma 6.2]{kirchror} there is
a sequence $(a_n)_{n \in \bbN} \subseteq A_+$ such that 
\[
\lim_{n \to \infty} \sup_{\tau \in T(A)} \lvert \tau(a_n) - f(\tau) \rvert = 0.
\]
This means that $\tau(a_n) \to 0$ for every $\tau \in F$ which, as $a_n \ge 0$ for every $n \in \bbN$, implies that $\pi(a_n)$ converges
to zero in the strong topology. The trace $\sigma$ is normal, hence $\sigma(\pi(a_n))$ also converges to zero,
which contradicts the fact
\[
\lim_{n \to \infty} \sigma(\pi(a_n)) = f(\sigma \circ \pi) = 1.
\]
\end{proof}

\begin{lemma} \label{lemma:ctr}
Let $A$ be a unital \cstar-algebra and let $X \subseteq T(A)$ be a non-empty closed face. Let $\pi = \bigoplus_{\tau \in X} \pi_\tau$
and let $\ctr \colon \pi(A)'' \to Z(\pi(A)'')$ be the center-valued trace on $\pi(A)''$. Then
\[
\lVert a \rVert_{2, X} = \lVert \ctr(\pi(a^*a)) \rVert^{1/2}, \ \forall a \in A.
\]
\end{lemma}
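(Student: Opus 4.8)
The plan is to read both sides of the equality as suprema of the \emph{same} family of scalar traces evaluated at $\pi(a^*a)$, and then match that family with $X$. First I would record that $M := \pi(A)''$ is a \emph{finite} von Neumann algebra: it embeds as a von Neumann subalgebra of $\prod_{\tau \in X} \pi_\tau(A)''$, each factor of which is finite (it carries the faithful normal tracial vector state arising from the GNS construction for $\tau$), and a von Neumann subalgebra of a finite von Neumann algebra is finite. Hence the center-valued trace $\ctr \colon M \to Z(M)$ genuinely exists and is the unique normal faithful tracial conditional expectation onto $Z(M)$. Writing $b = a^*a \in A_+$, the claim reduces to $\sup_{\tau \in X} \tau(b) = \lVert \ctr(\pi(b)) \rVert$, and taking square roots then finishes the proof.

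For the right-hand side I would exploit that $\ctr(\pi(b))$ is a positive element of the \emph{abelian} von Neumann algebra $Z(M)$, so its norm is attained on normal states: $\lVert \ctr(\pi(b)) \rVert = \sup_\omega \omega(\ctr(\pi(b)))$, with $\omega$ ranging over normal states of $Z(M)$. The structural fact I would invoke here is the standard bijection between normal states $\omega$ on $Z(M)$ and normal tracial states $\sigma$ on $M$, given by $\omega \mapsto \omega \circ \ctr$ with inverse $\sigma \mapsto \sigma\restriction{Z(M)}$; equivalently, every normal trace $\sigma$ on $M$ factors as $\sigma = (\sigma\restriction{Z(M)}) \circ \ctr$. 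This rewrites $\lVert \ctr(\pi(b)) \rVert = \sup_\sigma \sigma(\pi(b))$, where $\sigma$ now runs over all normal tracial states on $M$.

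It then remains to identify $\{\sigma \circ \pi : \sigma \text{ a normal trace on } M\}$ with $X$. For one inclusion, each normal trace $\sigma$ on $M$ gives a trace $\sigma \circ \pi$ on $A$, which by Lemma \ref{lemma:traces} lies in the closed face generated by $X$; as $X$ is itself a closed face, $\sigma \circ \pi \in X$, so $\sup_\sigma \sigma(\pi(b)) = \sup_\sigma (\sigma \circ \pi)(b) \le \sup_{\tau \in X}\tau(b)$. For the reverse inclusion I would note that each $\tau \in X$ occurs as a direct summand of $\pi = \bigoplus_{\tau' \in X}\pi_{\tau'}$, so the normal tracial state $\bar\tau$ implemented by the GNS vector of that summand (composed with the central projection onto it) satisfies $\bar\tau \circ \pi = \tau$; hence $\sup_{\tau \in X}\tau(b) = \sup_{\tau \in X}\bar\tau(\pi(b)) \le \sup_\sigma \sigma(\pi(b))$. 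Combining the two inequalities gives the equality.

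The finiteness of $M$ and the norm-as-supremum-of-normal-states formula are routine; the step that genuinely uses the hypothesis that $X$ is a closed face is the identification of the normal traces of $M$ (pulled back along $\pi$) with exactly the elements of $X$, where Lemma \ref{lemma:traces} does the essential work. I expect the only delicate point to be the $\ctr$-factorization, which is what converts the abstract operator norm $\lVert \ctr(\pi(b)) \rVert$ into an honest supremum of scalar traces; once that translation is in place both sides are suprema over the same set $X$ and the argument closes by a short comparison of suprema.
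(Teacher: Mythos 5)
Your proposal is correct and follows essentially the same route as the paper: both convert $\lVert \ctr(\pi(a^*a))\rVert$ into a supremum of scalar traces on $\pi(A)''$ via the bijection between states of the center and tracial states (the paper cites \cite[Theorem III.2.5.7]{blackadar}), and then use Lemma \ref{lemma:traces} together with the hypothesis that $X$ is a closed face to identify those traces, pulled back along $\pi$, with $X$. The only cosmetic difference is that you restrict to normal states/traces and invoke the norm-attainment on normal states, whereas the paper works with all tracial states at once; both variants are fine, and your explicit treatment of the reverse inequality (realizing each $\tau\in X$ as a normal vector trace on the corresponding summand) is a detail the paper leaves implicit.
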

\begin{proof}
By \cite[Theorem III.2.5.7]{blackadar} the map from the state space of $Z(\pi(A)'')$ to $T(\pi(A)'')$ sending
$\phi$ to $\phi \circ \ctr$ is a bijection, hence
\[
 \lVert \ctr(b^*b) \rVert^{1/2} = \lVert b \rVert_{2, T(\pi(A)'')}, \text{ for every } b \in \pi(A)''.
\]
The conclusion of the lemma with $b = \pi(a)$ for $a \in A$ follows by Lemma \ref{lemma:traces}.
\end{proof}

\begin{lemma} \label{lemma:ctrE}
Let $\cM$ be a factorial $W^*$-bundle over $K$ with conditional expectation $E \colon \cM \to C(K)$. Let $\pi = \bigoplus_{\mu \in {\text{Prob}}(K)} \pi_{\tau_\mu}$, let $\cN = \pi(\cM)''$ and denote by $\text{ctr}$ the center-valued trace of $\cN$. Then, for every $a \in \cM$
\[
\pi(E(a)) = \text{ctr}(\pi(a)).
\]
\end{lemma}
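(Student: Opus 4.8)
The plan is to prove the identity by testing both sides against states of the center $Z(\cN)$ and reducing everything to scalar trace computations on $\cM$, where factoriality will force the relevant traces to be of the form $\tau_\mu$. First I would record that both sides genuinely live in $Z(\cN)$: since $E(a) \in C(K) \subseteq Z(\cM)$, the element $\pi(E(a))$ commutes with the weakly dense subalgebra $\pi(\cM)$, hence with all of $\cN = \pi(\cM)''$, so $\pi(E(a)) \in Z(\cN)$; and $\text{ctr}(\pi(a)) \in Z(\cN)$ by the very definition of the center-valued trace. Consequently, to prove $\pi(E(a)) = \text{ctr}(\pi(a))$ it suffices to show that $\phi(\pi(E(a))) = \phi(\text{ctr}(\pi(a)))$ for every state $\phi$ of the $C^*$-algebra $Z(\cN)$, since states separate the points of $Z(\cN)$.

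Fix such a $\phi$. The composite $\sigma := \phi \circ \text{ctr}$ is a tracial state on $\cN$ (it is positive and unital, and it is tracial because $\text{ctr}$ is), so $\tau := \sigma \circ \pi$ is a trace on $\cM$. Using that $\text{ctr}$ restricts to the identity on $Z(\cN)$, I would rewrite both quantities in terms of $\tau$: on one hand $\phi(\text{ctr}(\pi(a))) = \sigma(\pi(a)) = \tau(a)$, and on the other hand, since $\pi(E(a)) \in Z(\cN)$, $\phi(\pi(E(a))) = \phi(\text{ctr}(\pi(E(a)))) = \sigma(\pi(E(a))) = \tau(E(a))$. Thus the whole statement collapses to the single scalar identity $\tau(a) = \tau(E(a))$.

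To establish this I would invoke Lemma \ref{lemma:traces} with $A = \cM$ and $X = \{\tau_\mu \mid \mu \in \text{Prob}(K)\}$, which yields that $\tau = \sigma \circ \pi$ lies in the closed face generated by $X$ in $T(\cM)$. This is exactly where factoriality enters: by definition $(\cM, \text{Prob}(K))$ being factorial means that $X$ is already a closed face of $T(\cM)$ (cf.\ Proposition \ref{prop:factor}), so the closed face it generates is $X$ itself, and hence $\tau = \tau_\mu$ for some $\mu \in \text{Prob}(K)$. For such a trace the desired identity is immediate from the conditional-expectation property $E \circ E = E$, since $\tau_\mu(E(a)) = \int_K E(E(a)) \, d\mu = \int_K E(a) \, d\mu = \tau_\mu(a)$. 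Combining the displays gives $\phi(\pi(E(a))) = \phi(\text{ctr}(\pi(a)))$ for every state $\phi$, whence the lemma.

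I expect the conceptual crux to be precisely the identification of the auxiliary trace $\tau = \sigma \circ \pi$ with an honest fiberwise trace $\tau_\mu$; this is the only place where factoriality is indispensable, and without it $\tau$ could be a trace lying outside $\text{Prob}(K)$, on which $E \circ E = E$ would no longer force $\tau(a) = \tau(E(a))$. The remaining steps are routine manipulations with the defining properties of $\text{ctr}$ (centrality of its range, fixing $Z(\cN)$ pointwise, and traciality), so the genuine content of the argument is concentrated in the appeal to Lemma \ref{lemma:traces} together with factoriality.
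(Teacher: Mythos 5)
Your proposal is correct and follows essentially the same route as the paper: both arguments reduce the identity to scalar equalities by pairing with traces on $\cN$ (equivalently, states of $Z(\cN)$ composed with $\ctr$), invoke Lemma \ref{lemma:traces} together with factoriality to identify the resulting trace $\tau\circ\pi$ on $\cM$ with some $\tau_\mu$, and conclude from $\tau_\mu\circ E=\tau_\mu$. The only cosmetic difference is that the paper phrases the separation step via the bijection between $T(\cN)$ and states of $Z(\cN)$ from \cite[Theorem III.2.5.7]{blackadar} and argues by contradiction, whereas you verify directly that $\pi(E(a))\in Z(\cN)$ and test against states of the center.
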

\begin{proof}
Suppose that $\pi(E(a)) \not = \text{ctr}(\pi(a))$ for some $b \in\cM$. By \cite[Theorem III.2.5.7]{blackadar}  there exists $\tau \in T(\cN)$ such that
$\tau(\pi(E(a))) \not = \tau(\text{ctr}(\pi(a)))$. As $\cM$ is factorial, by Lemma \ref{lemma:traces} there is
$\mu \in \text{Prob}(K)$ such that $\tau \circ \pi = \tau_\mu$. In particular $\tau_\mu \circ E = \tau_\mu$, hence on the one hand we have
\[
\tau(\pi(E(a))) = \tau_\mu(E(a)) = \tau_\mu(a).
\]
On the other hand
\[
\tau(\text{ctr}(\pi(a))) = \tau(\pi(a)) = \tau_\mu(a),
\]
which is a contradiction.
\end{proof}

\begin{proposition} \label{prop:dixmier}
Let $\cM$ be a factorial $W^*$-bundle over $K$ with conditional expectation $E \colon \cM \to C(K)$. For every $a \in \cM$ and $\e > 0$ there are unitaries $u_1, \dots, u_k \in \cM$ such that
\[
\Big \lVert E(a) - \frac{1}{k} \sum_{i = 1}^k u_i a u_i^* \Big \rVert_{2, K} < \e.
\]
\end{proposition}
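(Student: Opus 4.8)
The plan is to prove the proposition by a local-to-global argument: establish the ordinary (operator-norm) Dixmier property fibrewise, where factoriality forces the relevant averages to converge to a \emph{scalar}, and then patch these local averages over $K$ by means of a partition of unity. The guiding observation is Lemma \ref{lemma:ctrE}: for $\cN = \pi(\cM)''$ and the center-valued trace $\ctr$ one has $\ctr(\pi(a)) = \pi(E(a))$, so fibrewise the target of the averaging is exactly $E(a)$. Concretely, fix $a \in \cM$ and $\e > 0$. For $\lambda \in K$ the fibre $\pi_\lambda(\cM)''$ is a finite factor by Proposition \ref{prop:factor}, its unique trace sends $\pi_\lambda(a)$ to the scalar $E(a)(\lambda) = \tau_\lambda(a)$, and the strong Dixmier property in this factor produces unitaries in $\pi_\lambda(\cM)''$ whose average conjugate of $\pi_\lambda(a)$ is within $\e$ of $E(a)(\lambda)\cdot 1$ in operator norm. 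By the Kaplansky Density Theorem I would lift these, using the GNS representation of the single trace $\tau_\lambda$, to unitaries $u^\lambda_1, \dots, u^\lambda_N \in \cM$ so that $\lVert E(a) - \tfrac1N\sum_i u^\lambda_i a (u^\lambda_i)^* \rVert_{2,\tau_\lambda} < \e$.

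Since for a fixed $x \in \cM$ the map $\lambda' \mapsto \lVert x \rVert_{2,\tau_{\lambda'}} = (E(x^*x)(\lambda'))^{1/2}$ is continuous, each local estimate persists on an open neighbourhood $U_\lambda$ of $\lambda$. Compactness of $K$ yields finitely many such $U_{\lambda_1}, \dots, U_{\lambda_m}$, and after repeating unitaries I may assume a common number $N$ at each $\lambda_j$. Choose a partition of unity $\{f_j\}_{j \le m} \subseteq C(K) \subseteq Z(\cM)$ subordinate to this cover, set $u^{(j)}_i = u^{\lambda_j}_i$ and $\Phi_j(a) = \tfrac1N\sum_i u^{(j)}_i a (u^{(j)}_i)^*$, and form the central convex combination
\[
\Psi(a) = \sum_{j \le m} f_j\, \Phi_j(a).
\]
At any $\lambda' \in K$ only indices $j$ with $\lambda' \in \operatorname{supp}(f_j) \subseteq U_{\lambda_j}$ contribute, and for those $\lVert E(a) - \Phi_j(a) \rVert_{2,\tau_{\lambda'}} < \e$; as $E(a) = \sum_j f_j E(a)$ and the $f_j(\lambda')$ are non-negative with sum $1$, the triangle inequality gives $\lVert E(a) - \Psi(a) \rVert_{2,\tau_{\lambda'}} < \e$ for every $\lambda'$, hence $\lVert E(a) - \Psi(a) \rVert_{2,K} < \e$.

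It remains to rewrite $\Psi(a)$ as a genuine finite average of unitary conjugates of $a$ \emph{inside} $\cM$. Because $f_j$ is central and positive, $f_j\, u a u^* = (f_j^{1/2}u)\, a\, (f_j^{1/2}u)^*$, so $\Psi(a) = \tfrac1N\sum_i\sum_j (f_j^{1/2}u^{(j)}_i)\, a\, (f_j^{1/2}u^{(j)}_i)^*$ with $\sum_j (f_j^{1/2}u^{(j)}_i)^*(f_j^{1/2}u^{(j)}_i) = \sum_j f_j = 1$. I expect this conversion to be the main obstacle: the weights $f_j$ are continuous functions rather than central projections (if $K$ is connected there are no non-trivial ones), so one cannot simply assemble block-diagonal unitaries across a partition of the centre, and the identity must be realised by honest unitaries. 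By iterating, the task reduces to the single two-summand building block $c^2\, u a u^* + s^2\, v a v^*$ with $c = f^{1/2}$, $s = (1-f)^{1/2}$ central and $c^2 + s^2 = 1$. Here I would use the central rotation: the matrix $W = \left(\begin{smallmatrix} c & -s \\ s & c \end{smallmatrix}\right)\operatorname{diag}(u,v)$ is a unitary in $M_2(\cM)$, and a direct computation shows that the $(1,1)$-corner of $W(a \oplus a)W^*$ equals $c^2\, u a u^* + s^2\, v a v^*$. Equivalently, this is the statement that the unital trace-preserving channel $c^2\operatorname{Ad}(u) + s^2\operatorname{Ad}(v)$ lies in the $\lVert\cdot\rVert_{2,K}$-closure of averages of unitary conjugations; I would close the argument by the fine averaging technique (passing to a matrix amplification $M_m(\cM)$, which is again a factorial $W^*$-bundle over $K$ with expectation $E \otimes \operatorname{tr}_m$, and using rotations $R \in M_m(C(K))$ whose first row is $(f_1^{1/2}, \dots, f_m^{1/2})$—these exist because that row takes values in the contractible set of unit vectors with non-negative entries—together with averaging over the clock unitaries to extract the relevant corner).

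Finally, I would stress why the naive global route fails, which is precisely what makes the partition-of-unity construction necessary. One might hope to apply the strong Dixmier property directly in $\cN$, obtaining unitaries $V_i \in \cN$ with $\tfrac1k\sum_i V_i \pi(a) V_i^* \approx \ctr(\pi(a)) = \pi(E(a))$, and then pull them back to $\cM$. But $\pi(\cM)$ is not $\lVert\cdot\rVert_{2,K}$-dense in $\cN$—$\cM$ consists of the continuous sections while $\cN$ contains the measurable ones—so Kaplansky only gives strong${}^*$ approximation of the $V_i$, which does not control the uniform norm $\lVert\cdot\rVert_{2,K}$; the Dixmier unitaries of $\cN$ genuinely cannot be replaced by unitaries of $\cM$ in the relevant norm. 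This is exactly the point at which factoriality (making each fibre a factor, so that $\ctr$ is fibrewise scalar and equals $E$) together with the local construction is indispensable.
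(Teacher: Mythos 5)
Your local-to-global strategy gets off the ground correctly: the fibrewise Dixmier averaging in the factors $\pi_\lambda(\cM)''$, the Kaplansky lift to unitaries of $\cM$ controlling $\lVert\cdot\rVert_{2,\tau_\lambda}$, the persistence of the estimate on a neighbourhood, and the partition-of-unity patching producing $\Psi(a)=\sum_j f_j\Phi_j(a)$ with $\lVert E(a)-\Psi(a)\rVert_{2,K}<\e$ are all sound. But the proof then stops exactly where the real difficulty begins, and the device you offer to cross that gap does not work. The clock-unitary average in $M_m(\cM)$ only compresses $R\,U(a\otimes 1_m)U^*R^*$ to its diagonal; to get back into $\cM$ you must still extract the $(1,1)$-corner, and the $(1,1)$-corner of a unitary conjugate of $a\otimes 1_m$ is a sum $\sum_k v_k a v_k^*$ with $\sum_k v_kv_k^*=1$ but with non-unitary $v_k$ --- i.e.\ precisely the kind of object ($f_j^{1/2}u^{(j)}_i$ in place of unitaries) you were trying to eliminate. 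So the amplification argument is circular, and the statement that $\sum_j f_j\,u_j a u_j^*$ lies in the $\lVert\cdot\rVert_{2,K}$-closure of honest uniform averages of unitary conjugates is asserted rather than proved. (It \emph{can} be proved, but by a different mechanism: arrange, as Kaplansky density permits, that the lifted unitaries are exponentials $e^{ih}$, and replace the central weights by continuous paths $\lambda\mapsto e^{i\chi(f_j(\lambda))h}$ so that at each point of $K$ all but $O(1)$ of $k$ unitaries agree with one of the $u_j$ in the correct proportions; this reduces the weighted combination to a $1/k$-average up to an $O(1/k)$ error. None of this is in your write-up.)

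Your closing paragraph is also mistaken on a substantive point: the ``naive global route'' you dismiss is, in essence, the paper's actual proof. One applies Dixmier averaging in $\cN=\pi(\cM)''$ for $\pi=\bigoplus_{\mu\in\mathrm{Prob}(K)}\pi_{\tau_\mu}$, approximates the resulting unitaries strongly by unitaries of $\pi(\cM)$, and observes that the bounded net $(b_\lambda-b)^*(b_\lambda-b)$ then tends to $0$ ultraweakly, hence $\phi(\ctr((b_\lambda-b)^*(b_\lambda-b)))\to 0$ for every \emph{normal} $\phi$ on $Z(\cN)$. The step you claim is impossible --- upgrading this to control of the uniform norm $\lVert\cdot\rVert_{2,K}=\lVert\ctr(\pi(\,\cdot\,)^*\pi(\,\cdot\,))\rVert^{1/2}$ --- is exactly where factoriality enters: by Lemma \ref{lemma:traces} every trace on $\cN$ restricts on $\pi(\cM)$ to some $\tau_\mu$, so every state of $Z(\cN)$ agrees with a normal one on the relevant elements, whence the convergence holds for \emph{all} $\phi\in Z(\cN)^*$, i.e.\ weakly; a Hahn--Banach/convexity argument (together with the operator inequality $\ctr((c-b)^*(c-b))\le\sum_j\alpha_j\ctr((b_{\lambda_j}-b)^*(b_{\lambda_j}-b))$ for $c=\sum_j\alpha_j b_{\lambda_j}$) then produces a single convex combination that is small in norm. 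So a single strong$^*$ approximant indeed does not control $\lVert\cdot\rVert_{2,K}$, but a further convex combination of such approximants does, and no partition of unity is needed. As it stands, your argument has a genuine unfilled gap at its last step and rejects the route that actually closes it.
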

\begin{proof}
The proof follows closely the one of \cite[Theorem 3]{ozawa_dix}.
Fix $a \in \cM$ and $\e > 0$. Let $\pi$ be
the direct sum $\bigoplus_{\mu \in {\text{Prob}(K)}} \pi_{\tau_\mu}$ and set
$\cN = \pi (\cM)''$. Denote by $\ctr \colon \cN \to Z(\cN)$ the center-valued trace of $\cN$.
By the Dixmier Averaging Theorem (\cite[Theorem III.2.5.19]{blackadar})
and Lemma \ref{lemma:ctrE}, the element $\pi(E(a))$ belongs to the norm closure of the convex hull of
 $\{ u \pi(a) u^* \mid u \in \cU(\cN)\}$.
 
 Let $C = \{ \pi(u a u^*) \mid u \in \cU(\cM) \}$.
 By the Kaplansky Density Theorem every $u \in \cU(\cN)$ is strong limit of a net
of unitaries from $\pi(\cM)$. Since the adjoint operation is
strongly continuous on normal operators (\cite[Theorem 4.3.1]{murphy}) and multiplication
is strongly continuous on bounded sets (\cite[Remark 4.3.1]{murphy}), there exists a net $\{b_\lambda \}_{\lambda}$, where every $b_\lambda$ belongs to
convex hull of $C$, which converges to $b= \pi(E(a))$ in the strong topology. This in turn entails that the net
$\{(b_\lambda - b)^*(b_\lambda - b)\}_\lambda$ converges to 0 in the weak operator topology and thus, being
a bounded net, in the ultraweak topology. Since the center-valued trace $\ctr$ is ultraweakly continuous, this in particular
implies
\begin{equation} \label{eq:normal}
\phi\Big(\ctr((b_\lambda - b)^*(b_\lambda - b))\Big) \to 0, \text{ for every } \phi \in Z(\cN)_*,
\end{equation}
where $Z(\cN)_*$ denotes the set of normal functionals on $Z(\cN)$.

We claim that \eqref{eq:normal} implies
\begin{equation} \label{eq:nonormal}
\phi\Big(\ctr((b_\lambda - b)^*(b_\lambda - b))\Big) \to 0, \text{ for every } \phi \in Z(\cN)^*.
\end{equation}
Indeed, let $\phi \in Z(\cN)^*$. Then $\phi \circ \ctr \in T(\cN)$, hence by Lemma \ref{lemma:traces} there is  $\mu \in \text{prob}(K)$
such that $\phi \circ \ctr \restriction \pi(\cM) = \tau_\mu$. The latter extends to a normal trace on $\cN$, hence there is $\phi' \in Z(\cN)_*$
such that $\phi \circ \ctr \restriction \pi(\cM) = \phi' \circ \ctr \restriction \pi(\cM)$. We conclude that $\phi$ and $\phi'$ are equal
on $\{ \ctr((b_\lambda - b)^*(b_\lambda - b)) \}_\lambda$ by Lemma \ref{lemma:ctrE}, and therefore \eqref{eq:nonormal} follows.

By the Hahn--Banach Theorem there are thus finitely many $\alpha_j > 0$ with $\sum_j \alpha_j = 1$ such that
\begin{equation} \label{eq:small}
\Big \lVert \sum_j \alpha_j \text{ctr}((b_{\lambda_j} - b)(b_{\lambda_j} - b)^*) \Big \rVert < \e.
\end{equation}
Set $c = \sum_j \alpha_j b_{\lambda_j}$ and note that
\[
c=
\begin{bmatrix}
\alpha^{1/2}_1 \dots \ \alpha^{1/2}_m
\end{bmatrix}
\begin{bmatrix}
\alpha^{1/2}_1 b_{\lambda_1} \\
\vdots \\
\alpha^{1/2}_m b_{\lambda_m}
\end{bmatrix} =:rs.
\]
Hence $c^*c \le s^*r^*rs \le \| r \|^2 s^*s = \sum_j \alpha_jb_{\lambda_j}^* b_{\lambda_j}$, which in turn gives
\begin{align*}
\ctr((c - b)^*(c - b)) & =\ctr (c^*c - b^*c -c^*b + b^*b) \\
& \le \ctr\Big(\sum_j \alpha_jb_{\lambda_j}^* b_{\lambda_j} -b^* \sum_j \alpha_j b_{\lambda_j} -  \sum_j \alpha_j b_{\lambda_j}^*b  +b^*b\Big) \\
&= \ctr\Big(\sum_j \alpha_j (b_{\lambda_j} - b)^*(b_{\lambda_j} - b)\Big).
\end{align*}

As a consequence, by \eqref{eq:small}, $\lVert \ctr((c - b)^*(c - b)) \rVert < \e$.
Summarizing, there are unitaries $u_1, \dots, u_\ell \in \cM$ and $\beta_i\ge 0$ with $\sum_{i \le \ell} \beta_i = 1$
such that $c = \pi(\sum_{i\le \ell} \beta_i u_i a u_i^*)$, and by Lemma \ref{lemma:ctr} and the inequalities above it follows that
\[
\Big \lVert E(a) - \sum_{i \le \ell} \beta_{i} u_i a u_i^* \Big\rVert^2_{2,K} = \lVert \text{ctr}((b-c)^*(b-c)) \rVert < \e.
\]
\end{proof}

We need one last lemma before showing Theorem \ref{thm:center} (see also \cite[Lemma 4.2]{model_cstar:I}).
\begin{lemma} \label{lemma:bound}
Let $\cM$ be a factorial $W^*$-bundle over $K$ with conditional expectation $E \colon \cM \to C(K)$.
Then for every $a \in \cM$ the following holds
\[
\lVert a - E(a) \rVert_{2,K} \le \sup_{b \in \cM_1} \lVert [a,b] \rVert_{2,K} \le 2 \lVert a - E(a) \rVert_{2,K}.
\]
\end{lemma}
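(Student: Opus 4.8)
The plan is to treat the two inequalities separately, as they have rather different flavors: the upper bound is a routine computation with the tracial $2$-norm, while the lower bound is where the factoriality hypothesis genuinely enters, through the Dixmier-type averaging property established in Proposition \ref{prop:dixmier}.

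For the upper bound, the key observation is that $E$ takes values in $C(K) \subseteq Z(\cM)$, so $E(a)$ is central and hence $[a,b] = [a - E(a), b]$ for every $b \in \cM$. I would then invoke the standard submultiplicativity estimates $\lVert xy \rVert_{2,K} \le \lVert x \rVert_{2,K}\,\lVert y \rVert$ and $\lVert xy \rVert_{2,K} \le \lVert x \rVert\,\lVert y \rVert_{2,K}$, both of which follow fiberwise from the trace property. Expanding $[a - E(a), b] = (a-E(a))b - b(a-E(a))$, applying the triangle inequality, and using $\lVert b \rVert \le 1$ bounds each of the two summands by $\lVert a - E(a)\rVert_{2,K}$, which gives $\lVert [a,b]\rVert_{2,K} \le 2\lVert a - E(a)\rVert_{2,K}$ for every $b \in \cM_1$. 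Taking the supremum over $b$ yields the right-hand inequality.

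For the lower bound, I would exploit Proposition \ref{prop:dixmier}: given $\e > 0$, choose unitaries $u_1, \dots, u_k \in \cM$ with $\lVert E(a) - \frac{1}{k}\sum_i u_i a u_i^* \rVert_{2,K} < \e$. The point is to rewrite the averaged difference in terms of commutators. Since $u_i u_i^* = 1$, one has $a - u_i a u_i^* = (au_i - u_i a)u_i^* = [a, u_i]u_i^*$, and therefore $a - \frac{1}{k}\sum_i u_i a u_i^* = \frac{1}{k}\sum_i [a, u_i]u_i^*$. Using the right-invariance $\lVert x u_i^* \rVert_{2,K} = \lVert x \rVert_{2,K}$ of the $2$-norm (immediate from the trace property of $E$), the triangle inequality gives
\[
\Big\lVert a - \tfrac{1}{k}\sum_i u_i a u_i^* \Big\rVert_{2,K} \le \frac{1}{k}\sum_i \lVert [a, u_i]\rVert_{2,K} \le \sup_{b \in \cM_1}\lVert [a,b]\rVert_{2,K},
\]
where the last step uses that each unitary $u_i$ is a contraction. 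Combining this with the $\e$-approximation of $E(a)$ through one further triangle inequality produces $\lVert a - E(a)\rVert_{2,K} < \sup_{b \in \cM_1}\lVert [a,b]\rVert_{2,K} + \e$, and letting $\e \to 0$ completes the argument.

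The only real content lies in the lower bound, and specifically in having the averaging property at hand: without Proposition \ref{prop:dixmier} there would be no a priori mechanism to bring $E(a)$ within reach of unitary conjugates of $a$, and thus into the span of commutators. Everything else is bookkeeping with the standard invariance and submultiplicativity properties of the tracial $2$-norm. I therefore do not expect a genuine obstacle here, only the prerequisite that Proposition \ref{prop:dixmier} has already been established.
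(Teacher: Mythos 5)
Your proof is correct and follows essentially the same route as the paper: the upper bound via centrality of $E(a)$ plus the standard submultiplicativity/traciality estimates for $\lVert\cdot\rVert_{2,K}$, and the lower bound via Proposition \ref{prop:dixmier} together with the identity $a - u_i a u_i^* = [a,u_i]u_i^*$ and unitary invariance of the $2$-norm.
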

\begin{proof}
Let $a \in \cM$ and $b \in \cM_1$. The right-hand side inequality in the statement follows from the computation below
\begin{align*}
\lVert ab- ba \rVert_{2,K} &\le \lVert ab - E(a)b \rVert_{2,K} + \lVert bE(a) - ba \rVert_{2,K} \\
&\le \lVert a- E(a) \rVert_{2,K} \lVert b \rVert + \lVert b \rVert \lVert E(a) - a \rVert_{2,K}\\
&\le 2 \lVert E(a) - a \rVert_{2,K}.
\end{align*}

For the other inequality, given $\epsilon > 0$, by Proposition \ref{prop:dixmier} there exist unitaries $u_1, \dots, u_k \in \cM$ such that
\begin{equation} \label{eq:dix_approx}
\Big \lVert E(a) - \frac{1}{k} \sum_{i = 1}^k u_i a u_i^*\Big \rVert_{2,K} < \e.
\end{equation}
We thus have
\begin{align*}
\lVert a - E(a) \rVert_{2,K} & < \Big\lVert a - \frac{1}{k} \sum_{i = 1}^k u_i a u_i^*\Big \rVert_{2,K} + \e \\
& \le \frac{1}{k} \sum_{i = 1}^k \lVert au_i - u_ia \rVert_{2,K} + \e \\
& \le \sup_{b \in \cM_1} \lVert [a,b] \rVert_{2,K} + \e.
\end{align*}
\end{proof}

\begin{theorem} \label{thm:center}
Let $(\cM_i)_{i \in I}$ be a sequence of factorial $W^*$-bundles. Then
\[
Z\Big(\prod\nolimits^\cU \cM_i\Big) = \prod\nolimits^\cU Z(\cM_i).
\]
\end{theorem}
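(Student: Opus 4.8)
The plan is to prove the two inclusions separately, identifying both sides with the canonical central copy of $C(K)$, where $K = \sum^\cU K_i$ is the base space of the $W^*$-bundle $\cM := \prod^\cU \cM_i$ furnished by Proposition \ref{prop:ultra_w}, with conditional expectation $E^\cU$. First I would record that each factorial fiber satisfies $Z(\cM_i) = C(K_i)$: the inclusion $C(K_i) \subseteq Z(\cM_i)$ is part of the bundle structure, and conversely any central $z$ has $\sup_{b \in (\cM_i)_1}\lVert[z,b]\rVert_{2,K_i} = 0$, so Lemma \ref{lemma:bound} (applicable because $\cM_i$ is factorial) forces $\lVert z - E_i(z)\rVert_{2,K_i} = 0$, i.e. $z = E_i(z) \in C(K_i)$. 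Since the tracial $2$-norm restricts to the \cstar-norm on $C(K_i)$, the tracial ultraproduct $\prod^\cU Z(\cM_i)$ is exactly the \cstar-ultraproduct $\prod_\cU C(K_i) \cong C(\sum^\cU K_i) = C(K)$, and the natural map into $\cM$ identifies it with the canonical central copy of $C(K)$. As $C(K) \subseteq Z(\cM)$ by definition of the bundle, this gives the inclusion $\prod^\cU Z(\cM_i) \subseteq Z(\cM)$ for free.

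The substance is the reverse inclusion. Take $a = (a_i)_i \in Z(\cM)$; I want $a = E^\cU(a)$, equivalently $\lVert a - E^\cU(a)\rVert_{2,K} = 0$, which places $a$ in $C(K) = \prod^\cU Z(\cM_i)$. The idea is to apply Lemma \ref{lemma:bound} \emph{fiberwise} — crucially, this is legitimate because each $\cM_i$ is factorial, even though $\cM$ itself need not be (there is no dimension bound here, so Theorem \ref{thm:Wbundle_trace} is unavailable). Writing $S_i := \sup_{b\in(\cM_i)_1}\lVert[a_i,b]\rVert_{2,K_i}$, Lemma \ref{lemma:bound} gives $\lVert a_i - E_i(a_i)\rVert_{2,K_i} \le S_i$, and since $E^\cU(a) = (E_i(a_i))_i$ and $\lVert a - E^\cU(a)\rVert_{2,K} = \lim_{i\to\cU}\lVert a_i - E_i(a_i)\rVert_{2,K_i}$ (the $2$-norm on $\cM$ being the $\cU$-limit of the fiber $2$-norms, by Proposition \ref{prop:ultra_w}), it suffices to show $\lim_{i\to\cU} S_i = 0$.

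The main obstacle is precisely this last point: the suprema $S_i$ are taken over local contractions $b \in (\cM_i)_1$, and a priori near-optimisers need not assemble into a single contraction of the ultraproduct. I would resolve it by a direct selection argument: for each $i$ choose $b_i \in (\cM_i)_1$ with $\lVert[a_i,b_i]\rVert_{2,K_i} \ge S_i/2$ (any $b_i$ when $S_i = 0$); then $b := (b_i)_i$ is a contraction in $\cM$, and centrality of $a$ yields $0 = \lVert[a,b]\rVert_{2,K} = \lim_{i\to\cU}\lVert[a_i,b_i]\rVert_{2,K_i} \ge \tfrac12\lim_{i\to\cU} S_i$, forcing $\lim_{i\to\cU} S_i = 0$. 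Combined with the fiberwise estimate this gives $\lVert a - E^\cU(a)\rVert_{2,K} = 0$, so $a = E^\cU(a) = (E_i(a_i))_i \in \prod^\cU Z(\cM_i)$, completing the reverse inclusion and hence the theorem. The only remaining care is the routine check, immediate from Proposition \ref{prop:ultra_w}, that commutators and conditional expectations are computed coordinatewise and that the $2$-norm passes to $\cU$-limits.
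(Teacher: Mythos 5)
Your proof is correct and follows essentially the same route as the paper: the easy inclusion is noted, and the reverse inclusion is obtained by choosing near-optimal contractions $b_i$ witnessing $\sup_{c\in(\cM_i)_1}\lVert[a_i,c]\rVert_{2,K_i}$ up to a factor of $2$, assembling them into a single contraction of the ultraproduct, and invoking Lemma \ref{lemma:bound} fiberwise (where factoriality of each $\cM_i$ is exactly what is needed). The only cosmetic difference is that you spell out the identification $Z(\cM_i)=C(K_i)$ and $\prod^\cU Z(\cM_i)\cong C(K)$ for the forward inclusion, which the paper dispatches as automatic.
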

\begin{proof}
The inclusion $\prod\nolimits^\cU Z(\cM_i) \subseteq Z(\prod\nolimits^\cU \cM_i)$ always holds.
For the reverse inclusion, let $(a_i)_{i \in I} \in Z(\prod\nolimits^\cU \cM_i)$. It suffices to show that $(a_i)_{i \in I} = (E_i(a_i))_{i \in I}$. For every $i \in I$ let $b_i \in (\cM_i)_1$ be such that $2 \lVert [a_i,b_i] \rVert_{2,K_i}
\ge \sup_{c \in (\cM_i)_1} \lVert [a_i,c] \rVert_{2,K_i}$. Lemma \ref{lemma:bound} gives
\[
\lVert a_i - E_i(a_i) \rVert_{2,K_i} \le \sup_{c \in (\cM_i)_1} \lVert [a_i,c] \rVert_{2,K_i} \le 2 \lVert [a_i, b_i] \rVert_{2,K_i}, \text{ for every } i \in I.
\]
The right-most term goes to zero as $i \to \cU$ since $(a_i)_{i \in I} \in Z(\prod\nolimits^\cU \cM_i)$, hence $(a_i)_{i \in I} = (E_i(a_i))_{i \in I}$.
\end{proof}

\subsection{Consequences in Model Theory} \label{ss.mt}
Before proving Theorem \ref{thm:model}, we set up a model-theoretic framework suitable for tracially complete
\cstar-algebras.

We refer to \cite{modelc} for all the necessary background concerning continuous model theory of \cstar-algebras (see also \cite{clogic}
for a more general approach beyond the context of operator algebras). In \cite{modelc} unital \cstar-algebras are presented as
multi-sorted structures in the language $\cLc = \{ \lVert \cdot \rVert, + , \cdot, ^*, \{z \}_{z \in \mathbb{C}}, 0, 1 \}$
whose sorts, interpreted as the closed balls, represent the domains of quantification. This language is not handy for studying tracially complete \cstar-algebras, whose behavior is closer to that of tracial von Neumann algebras (see e.g. \cite{model_cstar:II}).

Anticipating the model
theoretic analysis of tracially complete \cstar-algebras which will be presented in \cite{tracialtransfer}, let $\cLt = \{ \lVert \cdot \rVert_2, + , \cdot, ^*, \{z \}_{z \in \mathbb{C}},0 ,1 \}$ be a language with a single sort and countably many domains $D_k$,
with two constant symbols $0$ and $1$, symbols for the algebraic operations $+,\cdot$, and  $^*$,  a symbol $z$ for each $z\in \mathbb{C}$ and a symbol for the tracial norm $\lVert \cdot \rVert_2$. 
The moduli of continuity assigned to the algebraic operations are chosen in the natural fashion. This is similar to the language
considered in \cite{model_cstar:II} for von Neumann algebras with the exception of the predicate $\text{tr}$, interpreted on tracial
von Neumann algebras as the trace.

A tracially complete \cstar-algebra $(\cM, X)$ is an $\cLt$-structure with the symbol $\lVert \cdot \rVert_2$ interpreted as $\lVert \cdot \rVert_{2,X}$, the operation symbols interpreted in the obvious way, and $D_k$ interpreted as the $k$-ball in the operator norm.

Given a sequence $((\cM_i, X_i) \mid i \in I)$ of tracially complete \cstar-algebras, it is possible to check that
the tracial ultraproduct $\prod^\cU \cM_i$ introduced in
\S\ref{ss.ultra} corresponds to the standard ultraproduct of metric structures (see \cite[Section 5]{clogic})
obtained when considering each $(\cM_i, X_i)$ as an $\cLt$-structure. In fact, the ultraproduct of the norms $\lVert \cdot \rVert_{2, X_i}$
can be verified to be precisely $\lVert \cdot \rVert_{2, \sum^\cU X_i}$, that is
\[
\lVert (a_i)_{i \in I} \rVert_{2, \sum^\cU X_i} = \lim_{i \to \cU} \lVert a_i \rVert_{2, X_i}, \text{ for every } (a_i)_{i \in I} \in \prod\nolimits^\cU \cM_i.
\]
This implies in particular that the unit ball of
$\prod^\cU \cM_i$ is $\lVert \cdot \rVert_{2, \sum^\cU X_i}$-complete (see \cite[Proposition 5.3]{clogic}), so it automatically follows that
$(\prod^\cU \cM_i, \sum^\cU X_i)$ is a tracially complete \cstar-algebra.
Through this section we stress the fact that we consider $\prod^\cU \cM_i$ as an
$\cLt$-structure with the 2-norm induced by $\sum^\cU X_i$ by saying that the ultraproduct of the sequence $((\cM_i, X_i) \mid i \in I)$
is the pair $(\prod^\cU \cM_i, \sum^\cU X_i)$.

In case the sequence $((\cM_i, X_i) \mid i \in I)$ is composed of $W^*$-bundles, with $X_i = \text{Prob}(K_i)$,
then the $W^*$-bundle
structure induced on $\prod^\cU \cM_i$ over $K= \sum^\cU K_i$ as in Proposition \ref{prop:ultra_w} makes the pair
$(\prod^\cU \cM_i, \text{Prob}(K))$ a tracially complete \cstar-algebra (see the discussion preceding Proposition \ref{prop:factor}). On the other hand, the ultraproduct of $((\cM_i, X_i) \mid i \in I)$
as $\cLt$-strcutres is $(\prod^\cU \cM_i, \sum^\cU X_i)$. These are two presentations of the same object, in fact $\sum^\cU X_i
= \text{Prob}(K)$. Indeed, given $(a_i)_{i \in I} \in \prod^\cU \cM_i$ we have
\begin{align} \label{eq:wultra}
\sup_{\tau \in \sum^\cU X_i} \lvert \tau((a_i)_{i \in I}) \rvert & = \lim_{i \to \cU} \sup_{\tau \in X_i} \lvert \tau(a_i) \rvert \\ \nonumber
&=   \lim_{i \to \cU} \sup_{\tau \in K_i} \lvert \tau(a_i) \rvert \\ \nonumber
&= \sup_{\tau \in K} \lvert \tau((a_i)_{i \in I}) \rvert \\\nonumber
&= \sup_{\tau \in \text{Prob}(K)} \lvert \tau((a_i)_{i \in I}) \rvert.
\end{align}
An application of the Hahn--Banach Theorem as in \cite[Lemma 4.4]{CETWW} then yields $\sum^\cU X_i
= \text{Prob}(K)$, since they are both convex and closed. In particular, in this case $\partial_e ( \sum^\cU X_i) = \sum^\cU \partial_e X_i$.

Given this model-theoretic setup, a direct application of \L o\'s Theorem gives
Theorem \ref{thm:Wbundle_trace} for ultrapowers over sets of indices different from $\bbN$.

\begin{proof}[Proof of Theorem \ref{thm:Wbundle_trace}]
Let $( \cM_i )_{i \in I}$ be a sequence of factorial $W^*$-bundles over $K_i$ and suppose there is $d \in \bbN$ such that $\text{dim}(K_i) \le d$
for every $i \in I$. Then $\cM = \prod^\cU \cM_i$ is a $W^*$-bundle over $K = \sum^\cU K_i$ and, as in the proof of Theorem
\ref{thm:Wbundle_trace_real} it is sufficient to show that for every $a \in \cM_{sa}$
\begin{equation} \label{eq:equality2}
\sup_{\lambda \in K} | \tau_\lambda(a) | = \sup_{\tau \in T(\cM)} | \tau(a) |.
\end{equation}

Consider the formula
\[
\phi(x,y) = \inf_{\substack{w_1, \dots, w_{10d}  \\ z_1, \dots, z_{10d}}} \Big \lVert x - y - 48 \sum_{i \le 10d} [w_i,z_i] \Big \rVert_2,
\]
where the $\inf$ ranges over the sort corresponding to the unit ball. Fix a contraction $a = (a_i)_{i \in I} \in \cM$.
Lemma \ref{lemma:fdlh_bundles} entails that $\phi^{(\cM_i, \text{Prob}(K_i))}(a_i, E_i(a_i)) = 0$ for every $i \in I$. Then, by \L o\'s's Theorem
(\cite[Theorem 5.4]{clogic}), we also have
\begin{equation} \label{eq:formula}
\phi^{(\cM, \text{Prob}(K))}(a, E^\cU(a)) = 0.
\end{equation}
In case $\sup_{\lambda \in K} | \tau_\lambda(a) | \le \delta$ for some $\delta \ge 0$ then $\lVert E^\cU(a) \rVert \le \delta$, hence,
arguing as in the proof of Theorem \ref{thm:Wbundle_trace_real}, it follows that 
\[
\sup_{\tau \in T(\cM)} | \tau(a) | \le \delta.
\]
The equality in \eqref{eq:equality2} follows since $\delta$ was chosen arbitrarily.
\end{proof}

The \cstar-algebras $A$ and $B$ considered in the statement of Theorem \ref{thm:model} are assumed to be elementarily
equivalent as $\cLc$-structures. In order to use the tools developed in the previous sections, we would like to be able to compare
their tracial completions, as defined in \cite{ozawa_dix}.
\begin{definition} \label{def:comp}
Given a unital \cstar-algebra $A$ with non-empty
trace space $T(A)$, its \emph{tracial completion} is the \cstar-algebra
\[
\overline{A}^{T(A)} = \frac{ \{ (a_n)_{n \in \bbN} \in \ell^\infty(A) \mid (a_n)_{n \in \bbN} \text{ is a $\lVert \cdot \rVert_{2, T(A)}$-Cauchy sequence} \}}{\{(a_n)_{n \in \bbN} \in \ell^\infty(A) \mid \lim_{n \to \infty} \| a_n \|_{2, T(A)} = 0\} }
\]
Every trace
$\tau \in T(A)$ canonically extends to a trace $\bar \tau$ on $\overline{A}^{T(A)}$, defined on each representing Cauchy sequence
$(a_n)_{n \in \bbN}$ as
\[
\bar \tau((a_n)_{n \in \bbN}) = \lim_{n \to \infty} \tau(a_n).
\]
We can thus identify $T(A)$ with a subset of the trace space of $\overline{A}^{T(A)}$, and it is immediate to check that
$(\overline{A}^{T(A)}, T(A))$ is a factorial tracially complete \cstar-algebra.
\end{definition}

Lemma \ref{lemma:eetracial} below is a straightforward generalization of \cite[Proposition 3.5.1]{modelc}, and it shows
that the first order $\cLc$-theory of a \cstar-algebra completely determines the $\cLt$-theory of its tracial completion for classes where the Cuntz--Pedersen
nullset is definable, in the sense of \cite[Chapter 3]{modelc}. Given a \cstar-algebra $A$, its \emph{Cuntz--Pedersen nullset} $A_0$,
introduced in \cite{cun_ped}, is the norm-closure of the linear span
of the set of self-adjoint commutators $[a, a^*]$. \cite[Theorem 2.9]{cun_ped} tightly relates the 2-norm $\lVert \cdot \rVert_{2,T(A)}$ of
an element with its distance from $A_0$, more precisely it shows that
\begin{equation} \label{eq:cp}
\lVert a \rVert_{2,T(A)}^2  = \sup_{\tau \in T(A)} \tau(a^*a) = d(a^*a, A_0), \ \forall a \in A.
\end{equation}

\cite[Theorem 3.5.5]{modelc} lists various classes where the Cuntz--Pedersen nullset is $\cLc$-definable. Examples are the set of exact $\mathcal{Z}$-stable
\cstar-algebras, and the collection of \cstar-algebras with strict comparison, to which the following lemma and Theorem \ref{thm:model}
apply.

\begin{lemma} \label{lemma:eetracial}
Let $A$ and $B$ be \cstar-algebras belonging to a class where the Cuntz-Pedersen nullset is definable.
If $A \equiv B$ as $\cLc$-structures, then $(\overline{A}^{T(A)} ,T(A)) \equiv (\overline{B}^{T(B)}, T(B))$ as $\cLt$-structures.
\end{lemma}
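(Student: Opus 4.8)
The plan is to show that the entire $\cLt$-structure of the tracial completion is determined by the $\cLc$-structure of the original algebra, by exploiting that the $2$-norm is an $\cLc$-definable predicate. Concretely, by \eqref{eq:cp} we have $\lVert a \rVert_{2,T(A)} = d(a^*a, A_0)^{1/2}$, and since we are in a class where the Cuntz--Pedersen nullset is definable, there is a single $\cLc$-definable predicate $\psi(x)$ --- uniform over the whole class (see \cite[Theorem 3.5.5]{modelc}) --- computing $d(x, A_0)$. Thus $\lVert \cdot \rVert_{2,T(A)}$ is itself $\cLc$-definable, uniformly over the class, and the same predicate serves for $B$. This is the only place where the hypothesis on the nullset enters, and it is what makes the $\cLt$-structure of $\overline{A}^{T(A)}$ interpretable inside the $\cLc$-structure $A$.

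First I would record the geometric fact underlying the construction: for each $k \in \bbN$ the operator-norm $k$-ball of $A$ is $\lVert \cdot \rVert_{2,T(A)}$-dense in the operator-norm $k$-ball of $\overline{A}^{T(A)}$; equivalently, the domain $D_k$ interpreted in $\overline{A}^{T(A)}$ is the $2$-norm completion of $D_k$ interpreted in $A$. This follows directly from Definition \ref{def:comp} together with a functional-calculus truncation to preserve the operator-norm bound. Using this, I would prove by induction on the complexity of $\cLt$-formulas the following translation statement: to every $\cLt$-formula $\varphi(\bar x)$ one can assign an $\cLc$-definable predicate $\varphi^\sharp(\bar x)$, depending only on $\varphi$ and on $\psi$ (hence uniform over the class), such that
\[
\varphi^{(\overline{C}^{T(C)}, T(C))}(\bar a) = (\varphi^\sharp)^{C}(\bar a)
\]
for every $C$ in the class and every tuple $\bar a$ from $C$. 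The atomic case $\varphi = \lVert t(\bar x) \rVert_2$ is handled by setting $\varphi^\sharp = \psi\big(t(\bar x)^* t(\bar x)\big)^{1/2}$, since the $*$-polynomial $t$ has the same interpretation in $C$ and in $\overline{C}^{T(C)}$; connectives are translated by composing with the same continuous functions.

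The step I expect to be the main obstacle is the quantifier case $\varphi = \inf_{x \in D_k} \chi(x, \bar y)$, where one must argue that the infimum over the $k$-ball of $\overline{C}^{T(C)}$ equals the infimum over the $k$-ball of $C$, so that $\varphi^\sharp = \inf_{x \in D_k} \chi^\sharp$. This is where the density statement is combined with the fact that $(\chi^\sharp)^{C}$, being built from $\psi$ and the algebraic operations, is $\lVert \cdot \rVert_{2,T(C)}$-uniformly continuous on operator-norm-bounded sets, so that near-optimizers in the completion may be $2$-approximated by elements of $C$ without changing the value. Granting the translation statement, the conclusion is immediate: for an $\cLt$-sentence $\varphi$ we obtain $\varphi^{\overline{A}^{T(A)}} = (\varphi^\sharp)^{A}$ and $\varphi^{\overline{B}^{T(B)}} = (\varphi^\sharp)^{B}$ with the \emph{same} $\cLc$-definable predicate $\varphi^\sharp$; since the value of an $\cLc$-definable predicate without free variables is determined by the $\cLc$-theory, $A \equiv B$ forces $(\varphi^\sharp)^{A} = (\varphi^\sharp)^{B}$, whence $\varphi^{\overline{A}^{T(A)}} = \varphi^{\overline{B}^{T(B)}}$ for every $\cLt$-sentence and $(\overline{A}^{T(A)}, T(A)) \equiv (\overline{B}^{T(B)}, T(B))$. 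This mirrors the monotracial case \cite[Proposition 3.5.1]{modelc}, of which it is the natural generalization.
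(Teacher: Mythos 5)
Your proposal is correct and follows essentially the same route as the paper: definability of the Cuntz--Pedersen nullset makes $\lVert \cdot \rVert_{2,T(A)}$ an $\cLc$-definable predicate via \eqref{eq:cp}, and then an induction on the complexity of $\cLt$-formulas (atomic case from definability of the $2$-norm, quantifier case from $\lVert\cdot\rVert_2$-density of the unit ball of $A$ in that of $\overline{A}^{T(A)}$) shows every $\cLt$-sentence is computed by an $\cLc$-definable predicate, so elementary equivalence transfers. Your write-up supplies more detail than the paper's sketch --- in particular the explicit uniformity of the translation over the class and the uniform-continuity argument in the quantifier step --- but the underlying argument is the same.
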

\begin{proof}
This fact is a consequence of \cite[Proposition 3.5.1]{modelc} in case both $T(A)$ and $T(B)$ are singletons.
More generally, given a \cstar-algebra $A$, definability of $A_0$
implies that the norm $\lVert \cdot \rVert_{2,T(A)}$ is also a definable predicate, by \cite[Theorem 3.2.2]{modelc} and \eqref{eq:cp}.

It follows that if $A$ is a \cstar-algebra belonging to a class where the Cuntz-Pedersen nullset is definable, then all interpretations of
$\mathcal{L}_2$-formulas on $A$ are  $\cLc$-definable predicates. This can be proved arguing by induction on the complexity of formulas,
with the atomic case being covered since $\lVert \cdot \rVert_{2,T(A)}$ is a definable predicate and the quantifier case since $A_1$ is dense in $(\overline{A}^{T(A)})_1$.
\end{proof}

The following lemma shows that isomorphisms between tracially
complete \cstar-algebras preserving the 2-norm also induce affine homeomorphisms between the sets of traces inducing the 2-norms. The argument, an application of the Hahn--Banach Theorem, is due
to the authors of \cite{tracially_complete}.
\begin{lemma}[{\cite{tracially_complete}}] \label{lemma:iso_simplex}
Let $(\cM, X)$ and $(\cN, Y)$ be two tracially complete \cstar-algebra and let $\theta\colon \cM \to \cN$ be an isomorphism such that
$\lVert a \rVert_{2, X} = \lVert \theta(a) \rVert_{2, Y}$ for every $a \in \cM$. Then $\theta^*(Y) = X$.
\end{lemma}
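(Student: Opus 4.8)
The plan is to show that the pullback $\theta^*$ carries $Y$ onto $X$ by proving that the two weak$^*$-compact convex subsets $X$ and $X' := \theta^*(Y)$ of $T(\cM)$ have the same support function on $\cM_{sa}$, and then invoking Hahn--Banach separation to conclude that they coincide. First I would record that $X' = \theta^*(Y)$ really is a weak$^*$-compact convex subset of $T(\cM)$: the map $\theta^*\colon \sigma \mapsto \sigma\circ\theta$ is weak$^*$-continuous and affine and sends traces on $\cN$ to traces on $\cM$, so it carries the compact convex set $Y$ to a compact convex set.

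The hypothesis on $\theta$, combined with the fact that $\theta$ is a $^*$-isomorphism (so $\theta(a^*a)=\theta(a)^*\theta(a)$), unwinds to
\[
\sup_{\tau \in X} \tau(a^*a) = \norm{a}_{2,X}^2 = \norm{\theta(a)}_{2,Y}^2 = \sup_{\sigma \in Y} \sigma(\theta(a^*a)) = \sup_{\tau' \in X'} \tau'(a^*a)
\]
for every $a \in \cM$. Since every positive element of $\cM$ has the form $a^*a$ (take $a=b^{1/2}$), this says precisely that $\sup_{\tau\in X}\tau(b)=\sup_{\tau'\in X'}\tau'(b)$ for all $b \in \cM_+$. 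To promote this to arbitrary self-adjoint elements, given $a\in\cM_{sa}$ I would apply the equality to the positive element $a+\norm{a}1$; since every trace is a state, $\tau(a+\norm{a}1)=\tau(a)+\norm{a}$, and cancelling the common constant $\norm{a}$ from both suprema yields $\sup_{\tau\in X}\tau(a)=\sup_{\tau'\in X'}\tau'(a)$ for every $a\in\cM_{sa}$. Thus the support functions of $X$ and $X'$ agree on $\cM_{sa}$.

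Finally I would close with Hahn--Banach. If $X\neq X'$, then (both being compact and convex) one of them contains a point $\tau_0$ not in the other; separating $\tau_0$ from that weak$^*$-compact convex set produces a weak$^*$-continuous real functional, necessarily of the form $\tau\mapsto\tau(a)$ for some $a\in\cM_{sa}$, which strictly separates $\tau_0$ from it. This would force $\sup_{\tau\in X}\tau(a)\neq\sup_{\tau'\in X'}\tau'(a)$, contradicting the equality of support functions, so $\theta^*(Y)=X$.

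The step I would treat as the crux is the passage through positive elements: the $2$-norm hypothesis only directly controls $\sup\tau(a^*a)$, so recovering the full support function genuinely relies both on the identity $\theta(a^*a)=\theta(a)^*\theta(a)$ and on the shifting trick $a\mapsto a+\norm{a}1$, which is exactly what is needed to feed the argument into Hahn--Banach. Everything else is routine.
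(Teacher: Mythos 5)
Your proof is correct. It follows the same broad strategy as the paper's --- reduce the statement to a Hahn--Banach separation of the two weak$^*$-compact convex sets, and feed the $2$-norm hypothesis in through positive elements and their square roots --- but the two arguments are organized differently and rest on different ingredients at the key step. The paper argues by contradiction from the start: it separates a point $\tau \in X \setminus \theta^*(Y)$ from $\theta^*(Y)$ by a continuous affine function $f$ on $T(\cM)$, and then invokes a nontrivial approximation result (\cite[Lemma 6.2]{kirchror}, in the spirit of Cuntz--Pedersen) to replace $f$ by an evaluation at a \emph{positive} element $a \in \cM_+$, after which $\lVert \theta(a^{1/2})\rVert_{2,Y} < \lVert a^{1/2}\rVert_{2,X}$ gives the contradiction. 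You instead first establish equality of the support functions $\sup_{\tau\in X}\tau(\cdot)$ and $\sup_{\tau'\in\theta^*(Y)}\tau'(\cdot)$ on all of $\cM_{sa}$ --- on positives directly from the hypothesis via $b = (b^{1/2})^*b^{1/2}$ and $\theta(b^{1/2})^*\theta(b^{1/2}) = \theta(b)$, and on self-adjoints by the shift $a \mapsto a + \lVert a\rVert 1$ --- and only then separate, using the elementary duality fact that weak$^*$-continuous functionals on $\cM^*$ are evaluations at elements of $\cM$ (take real parts to land in $\cM_{sa}$). The net effect is that your argument is slightly more self-contained: it replaces the citation to \cite[Lemma 6.2]{kirchror} by the shift trick plus the standard description of the weak$^*$ dual, at the modest cost of an extra reduction step. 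Both proofs are complete and correct.
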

\begin{proof}
Suppose that $\theta^*(Y) \not = X$ and that there is $\tau \in X \setminus \theta^*(Y)$ (the case where $\tau \in \theta^*(Y) \setminus
X$ is analogous). As $\theta^*(Y)$ is closed and convex, by the
Hahn--Banach Theorem there exists an affine, positive, continuous function $f$ on $T(\cM)$
and $\alpha \in \mathbb{R}$ such that $f(\tau) > \alpha$ and $f(\sigma) < \alpha$ for every $\sigma \in \theta^*(Y)$. By \cite[Lemma 6.2]{kirchror}
we can approximate $f$ arbitrarily well with evaluations on positive elements of $\cM$, hence there exists $a \in \cM_+$ such that
\[
\sup_{\sigma \in \theta^*(Y)} \sigma(a) < \tau(a).
\]
This gives $\lVert \theta(a^{1/2}) \rVert_{2, Y} < \lVert a^{1/2} \rVert_{2, X}$, which contradicts the assumption on $\theta$.
\end{proof}

In the next proof we repeatedly use the fact, a consequence of a standard density argument, that the tracial ultraproduct $A^\cU$
of a unital \cstar-algebra $A$ is equal to the ultrapower of its tracial completion $(\overline{A}^{T(A)})^\cU_{T(A)}$.

\begin{theorem}
Let $A,B$ be two unital \cstar-algebras belonging to any class where the Cuntz--Pedersen nullset is definable.
Suppose that $A \equiv B$ (as $\cLc$-structures). Then $T(A)$ is a Bauer simplex if and only if 
$T(B)$ is. If moreover both $T(A)$ and $T(B)$ are Bauer simplices and second countable,
then $\partial_e T(A)$ and $\partial_e T(B)$ have the same covering dimension.
\end{theorem}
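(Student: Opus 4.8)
The plan is to push everything to the tracial completions and then read off Bauerness and the covering dimension from the $\cLt$-theory. First I would invoke Lemma \ref{lemma:eetracial}: since $A,B$ lie in a class where the Cuntz--Pedersen nullset is definable and $A\equiv B$ as $\cLc$-structures, their tracial completions satisfy $\cM_A:=(\overline{A}^{T(A)},T(A))\equiv \cM_B:=(\overline{B}^{T(B)},T(B))$ as $\cLt$-structures. Both are factorial tracially complete \cstar-algebras, and by Theorem \ref{thm:bauer} together with the discussion following Proposition \ref{prop:factor}, $T(A)$ is Bauer exactly when $\cM_A$ admits a $W^*$-bundle structure, i.e. when it is a factorial $W^*$-bundle. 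So both assertions reduce to extracting from $\mathrm{Th}(\cM_A)$ the information: (i) whether $\cM_A$ is a $W^*$-bundle, and (ii) the covering dimension of its base space.

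For (i) I would isolate an elementary property equivalent to Bauerness, a tracial Dixmier property. Writing $\kappa(a)=\sup_{b\in\cM_1}\lVert[a,b]\rVert_{2}$ for the predicate measuring noncentrality, consider for each $k$ the sentence
\[
\psi_k=\sup_{a\in\cM_1}\;\inf_{u_1,\dots,u_k}\;\kappa\Big(\tfrac1k\sum_{i\le k}u_iau_i^*\Big),
\]
the inner $\inf$ ranging over the unitaries (a definable set in the tracial language), and let $\mathrm{(TDP)}$ be the condition $\inf_k\psi_k=0$. Elementary equivalence preserves the value of each sentence $\psi_k$, hence of $\inf_k\psi_k$, so $\mathrm{(TDP)}$ transfers from $\cM_A$ to $\cM_B$. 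The implication \emph{$W^*$-bundle $\Rightarrow\mathrm{(TDP)}$} follows from Proposition \ref{prop:dixmier}: the averages converge in $\lVert\cdot\rVert_{2,K}$ to $E(a)\in Z(\cM)$, while $\kappa$ is $2$-Lipschitz and $\kappa(E(a))=0$; I would upgrade this to the \emph{uniform} convergence needed for $\psi_k\to0$ by appealing to the quantitative (geometric) Dixmier averaging in the ambient von Neumann algebra $\cN=\bigoplus_\tau\pi_\tau(\cM)''$ used in the proof of Proposition \ref{prop:dixmier}. For the converse \emph{$\mathrm{(TDP)}\Rightarrow W^*$-bundle}, I would set $E(a)$ to be the central point in the $\lVert\cdot\rVert_2$-closed convex hull of the unitary orbit of $a$; factoriality (via Proposition \ref{prop:factor}, exactly as in Lemma \ref{lemma:ctrE}) forces uniqueness, since any central $z$ in that hull has $\tau(z)=\tau(a)$ for all $\tau$, so $\pi_\tau(z)$ is the scalar $\tau(a)$ for extreme $\tau$. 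One then checks $E$ is a trace-preserving conditional expectation onto $Z(\cM)=C(Y)$, and that extreme traces restrict to characters of $Z(\cM)$, so that $\lVert a\rVert_{2,X}^2=\lVert E(a^*a)\rVert_\infty$; this exhibits $\cM$ as a $W^*$-bundle over $Y$, whence $X=\mathrm{Prob}(Y)$ is Bauer. This gives $T(A)$ Bauer $\iff T(B)$ Bauer.

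For (ii), assume both $T(A),T(B)$ are Bauer and second countable, so $\cM_A,\cM_B$ are factorial $W^*$-bundles over metrizable compacta $K_A=\partial_eT(A)$, $K_B=\partial_eT(B)$ with $Z(\cM_A)=C(K_A)$ and $Z(\cM_B)=C(K_B)$. The decisive point is that the center is a \emph{definable set}: the left inequality of Lemma \ref{lemma:bound} gives $\lVert a-E(a)\rVert_2\le\kappa(a)$ and hence $d_2(a,Z(\cM))\le\kappa(a)$, so the zero set of the formula $\kappa$ is exactly $Z(\cM)$ and the distance to it is controlled by $\kappa$, uniformly across factorial $W^*$-bundles (this uniform definability is precisely what underlies Theorem \ref{thm:center}). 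Consequently $\cM_A\equiv\cM_B$ restricts to an elementary equivalence of the induced structures on the centers; since $\lVert\cdot\rVert_2$ coincides with the \cstar-norm there, this reads $C(K_A)\equiv C(K_B)$ as \cstar-algebras, i.e. $K_A$ and $K_B$ are co-elementarily equivalent. I would then invoke the theorem of Bankston that covering dimension is a co-elementary invariant of metrizable compacta to conclude $\dim K_A=\dim K_B$.

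The \textbf{main obstacle} is the converse $\mathrm{(TDP)}\Rightarrow W^*$-bundle: that the mere ability to average into the center reconstructs the bundle (uniqueness of the central average, compatibility of $E$ with the $2$-norm through the character property of extreme traces, and that $\partial_eX\to Y$ is a homeomorphism). Establishing the genuinely \emph{uniform}, and hence elementary, form of Dixmier averaging needed for $\psi_k\to0$ is the technical heart of part (i); once Bauerness is characterized elementarily, part (ii) rests chiefly on the external fact that covering dimension is preserved under co-elementary equivalence.
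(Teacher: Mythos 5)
Your step (ii) is essentially the paper's argument in different clothing: saying that $Z(\cM)$ is a definable set via Lemma \ref{lemma:bound} is the syntactic counterpart of Theorem \ref{thm:center} (center of the ultraproduct equals ultraproduct of the centers), and both yield $C(K_A)\equiv C(K_B)$; your endgame via Bankston's theorem that covering dimension is a co-elementary invariant is a legitimate alternative to the paper's route through decomposition rank and \cite[Theorem 5.7.3]{modelc}. The real problem is step (i), where you diverge from the paper. The paper does \emph{not} axiomatize Bauerness: it applies Keisler--Shelah to get a $2$-norm isometric isomorphism $(A^\cU,T(A)^\cU)\cong(B^\cU,T(B)^\cU)$, observes that $T(A)^\cU$ is Bauer because ultraproducts of $W^*$-bundles are $W^*$-bundles (Proposition \ref{prop:ultra_w}), transfers this through Lemma \ref{lemma:iso_simplex}, and then descends from ``$T(B)^\cU$ Bauer'' to ``$T(B)$ Bauer'' by approximating a trace in $\overline{\partial_e T(B)}$ by limit traces $(\sigma_i)_{i\in I}$ with $\sigma_i$ extremal, whose GNS ultraproduct is a factor. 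None of this requires any uniform averaging statement.

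Your route instead needs the equivalence $\mathrm{(TDP)}\Leftrightarrow$ Bauer, and \emph{both} directions are genuinely unproven. For ``$W^*$-bundle $\Rightarrow\mathrm{(TDP)}$'': Proposition \ref{prop:dixmier} is pointwise in $a$, whereas $\inf_k\psi_k=0$ demands a single $k=k(\e)$ working for \emph{all} contractions simultaneously. Your proposed fix --- quantitative Dixmier averaging in $\cN=\bigoplus_\tau\pi_\tau(\cM)''$ --- does give uniform operator-norm convergence in $\cN$, but you then have to replace unitaries of $\cN$ by unitaries of $\cM$, and Kaplansky density only approximates in the strong topology (equivalently, in $\lVert\cdot\rVert_{2,\tau}$ for one normal trace at a time), not in the uniform norm $\lVert\cdot\rVert_{2,K}$; bridging that gap is exactly what the non-quantitative Hahn--Banach step in the proof of Proposition \ref{prop:dixmier} is for, and it destroys uniformity in $a$. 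Note also that the modulus $k(\e)$ \emph{cannot} be chosen uniformly over the class of factorial $W^*$-bundles: if it could, \L o\'s would propagate $\mathrm{(TDP)}$ to the Phillips--Powers ultraproduct of Section \ref{s.up}, and your converse would then make it factorial, contradicting Theorem \ref{thm:up}. So you are forced to prove a per-algebra uniform Dixmier property, a nontrivial strengthening of anything in the paper. For the converse ``$\mathrm{(TDP)}\Rightarrow$ Bauer'', which you yourself flag as the main obstacle, the sketch is plausible (one can show $\kappa(b)$ small forces $\pi(b)$ close to $\ctr(\pi(b))$ in the uniform $2$-norm, use completeness of $\cM_1$ to produce $E$ with $\pi\circ E=\ctr\circ\pi$, and then use that $X$ is a closed face to identify $X$ with $\mathrm{Prob}(Y)$), but as written it is a program, not a proof. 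As it stands, step (i) does not establish the theorem; the paper's ultrapower argument avoids all of these difficulties and is the argument you should compare against.
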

\begin{proof}
Fix $A$ and $B$ as in the statement. By Lemma \ref{lemma:eetracial} their tracial completions
are elementarily equivalent as $\cLt$-structures, thus by \cite[Theorem 5.7]{clogic} there are an index set $I$ and an ultrafilter $\cU$ over $I$ such that $(A^\cU, T(A)^\cU) \cong (B^\cU, T(B)^\cU)$
with an isomorphism which is $\lVert \cdot \rVert_{2, T(A)^\cU}-\lVert \cdot \rVert_{2, T(B)^\cU}$-isometric. Suppose that $T(A)$
is a Bauer simplex. By Theorem \ref{thm:bauer} and Proposition \ref{prop:ultra_w} the pair $(A^\cU, T(A)^\cU)$ can be
endowed with a $W^*$-bundle structure over $(\partial_e T(A))^\cU = \partial_e (T(A)^\cU)$ and thus $T(A)^\cU$ is a Bauer simplex
(see also the discussion preceding the computation in \eqref{eq:wultra}).
This in turn implies, by Lemma \ref{lemma:iso_simplex}, that $T(B)^\cU$ is a Bauer simplex.

Fix $\tau \in \overline{\partial_e T(B)}$. We prove that the canonical extension of $\tau$ to $B^\cU$ (which we still denote $\tau$) belongs to the boundary
of $T(B)^\cU$. This is sufficient to conclude that $\tau \in \partial_e T(B)$, since any non-trivial convex decomposition of $\tau$ in $T(B)$ induces a non-trivial
convex decomposition of $\tau$ as an element of $T(B)^\cU$. The simplex $T(B)^\cU$ is Bauer, hence it is enough to prove that $\tau$ is in the closure of
$\partial_e (T(B)^\cU)$. To this end, pick $\e > 0$ and $a_1, \dots, a_m \in B^\cU$.
Fix a representing sequence
$(a_{j,i})_{i \in I}$ of $a_j$ for all $j \le m$. For every $i \in I$, let $\sigma_i \in \partial_e T(B)$ be such that
\begin{equation} \label{eq:aij}
\max_{j \le m} \{ \lvert \sigma_i(a_{j,i}) - \tau(a_{j,i}) \rvert \} < \e.
\end{equation}
Notice that the limit trace $\sigma =(\sigma_i)_{i \in I}$ is extremal in $T(B^\cU)$. This is the case since by uniqueness of the GNS representation,
the identity map on $B^\cU$ induces a surjective isomorphism
\[
\pi_{\sigma}(B^\cU)'' \to \prod\nolimits^\cU \pi_{\sigma_i}(B)'',
\]
between the von Neumann algebra $\pi_{\sigma}(B^\cU)''$ generated by the GNS-representation corresponding to $\sigma$,
and the tracial ultraproduct of those corresponding to $\sigma_i$.
Each $\sigma_i$ is extremal, hence every $\pi_{\sigma_i}(B)''$ is factor, and so is their ultraproduct.
We conclude that $\pi_{\sigma}(B^\cU)''$ is a factor and therefore that $\sigma$ is an extremal point in $T(B^\cU)$ approximating $\tau$.

We rely on Theorem \ref{thm:center} to prove the second part of the statement. More in detail, the completion $\overline{A}^{T(A)}$ is a
factorial $W^*$-bundle over $\partial_e T(A)$ by Theorem \ref{thm:bauer}, and its center is equal to $C(\partial_e T(A))$ by factoriality.
The same holds for $B$. The relation $A^\cU \cong B^\cU$ implies $Z(A^\cU) \cong Z(B^\cU)$, hence Theorem \ref{thm:center} gives $C(\partial_e T(A))^\cU \cong C(\partial_e T(B))^\cU$. This in turn gives $C(\partial_e T(A)) \equiv C(\partial_e T(B))$ as
$\cLc$-structures, since for abelian \cstar-algebras the tracial ultrapower is equal to the \cstar-norm ultrapower.
The covering dimension of $X$ is equal to the decomposition rank of $C(X)$ (\cite[Proposition 3.3]{kirch_win}; this is the only
step where the fact that $T(A)$ and $T(B)$ are second countable is used), and the latter is definable
by uniform families of formulas (\cite[Theorem 5.7.3]{modelc}), therefore the conclusion follows.
\end{proof}

\bibliographystyle{amsalpha}
\bibliography{Bibliography}
\end{document}